\tikzset{
Square/.style = {
inner sep = 0 pt,
minimum width = 8 mm,
minimum height = 8 mm,
draw = black,
fill = none,
align = center
}}
\def\mR{\mathbb{R}}
\def\tn{\textnormal}
\newcommand{\ignore}[1]{}
\newcommand{\set}[1]{\left\{#1\right\}}
\def\D{\mathcal{D}}
\def\T{\mathcal{T}}
\def\bigcdot{\bullet}
\definecolor{webgreen}{rgb}{0,.5,0}
\definecolor{webbrown}{rgb}{.6,0,0}
\newcommand{\seqnum}[1]{\href{http://oeis.org/#1}{\underline{#1}}}
\begin{document}

\theoremstyle{plain}
\newtheorem{theorem}{Theorem}
\newtheorem{corollary}[theorem]{Corollary}
\newtheorem{lemma}[theorem]{Lemma}
\newtheorem{proposition}[theorem]{Proposition}

\theoremstyle{definition}
\newtheorem{definition}[theorem]{Definition}
\newtheorem{example}[theorem]{Example}
\newtheorem{conjecture}[theorem]{Conjecture}
\newtheorem{notation}[theorem]{Notation}
\newtheorem{remark}[theorem]{Remark}

\theoremstyle{remark}

\begin{center}
\vskip 1cm{\LARGE\bf 
Enumeration and Asymptotic Formulas  \\
\vskip .1in
for Rectangular Partitions of the Hypercube
}
\vskip 1cm
\large
Yu Hin (Gary) Au\\
Department of Mathematics and Statistics\\
University of Saskatchewan\\
Saskatoon, SK\\
Canada \\
\href{mailto:au@math.usask.ca}{\tt au@math.usask.ca} \\
\ \\
Fatemeh Bagherzadeh\\
Department of Mathematics and Statistics\\
University of Saskatchewan\\
Saskatoon, SK\\
Canada \\
\href{mailto: bagherzadeh@math.usask.ca}{\tt bagherzadeh@math.usask.ca} \\
\ \\
Murray R. Bremner\\
Department of Mathematics and Statistics\\
University of Saskatchewan\\
Saskatoon, SK\\
Canada \\
\href{mailto:bremner@math.usask.ca}{\tt bremner@math.usask.ca} \\
\ \\
\end{center}

\begin{abstract}
We study a two-parameter generalization of the Catalan numbers
$C_{d,p}(n)$, which counts the number of ways to subdivide the $d$-dimensional hypercube into 
$n$ rectangular regions using orthogonal partitions of fixed arity $p$. 
Bremner \& Dotsenko first introduced the numbers $C_{d,p}(n)$ in their work on
tensor products of operads, wherein they used homological algebra to prove 
a recursive formula and a functional equation.
We express $C_{d,p}(n)$ as simple finite sums, and
determine their growth rate and asymptotic behavior. 
We give an elementary combinatorial proof of the functional equation, as well as a bijection between hypercube decompositions and a family of full $p$-ary trees.
Our results generalize the well-known correspondence 
between Catalan numbers and full binary trees.
\end{abstract}

\section{Introduction}\label{sec1}

\subsection{Catalan numbers}\label{subsec11}

The (binary) Catalan numbers form a well-known and ubiquitous integer sequence
\footnote{We index these numbers starting at $n=1$ rather than the more conventional
$n=0$ for reasons that will become apparent in the later sections.}:
\[
C(n) = \frac{1}{n} \binom{2n{-}2}{n{-}1} \qquad (n \ge 1).
\]
These numbers have over 200 different combinatorial interpretations; see Stanley \cite{Stanley} and sequence \seqnum{A000108} from the On-Line Encyclopedia of Integer Sequences (OEIS)~\cite{OEIS}.
We focus on the following three:
\begin{itemize}
\item[(i)]
Given a set with a binary operation, 
$C(n)$ counts the ways to parenthesize 
a sequence with $n{-}1$ operations and $n$ factors.
For example, the $C(4) = 5$ ways to parenthesize a product with $4$ factors 
using 3 operations are as follows:
\[
(((ab)c)d), \qquad
((a(bc))d), \qquad
((ab)(cd)), \qquad
(a((bc)d)), \qquad
(a(b(cd))).
\]
\item[(ii)]
$C(n)$ counts the plane rooted binary trees with $n{-}1$ internal nodes
(including the root) and $n$ leaves,
assuming that every internal node has two children.
For example, the $C(4) = 5$ rooted full binary trees with $4$ leaves
and 3 internal nodes are as follows:
\begin{center}
\begin{tabular}{c@{\qquad}c@{\qquad}c@{\qquad}c@{\qquad}c}
\begin{tikzpicture}
[scale=0.329,thick,main node/.style={circle,inner sep=0.4mm,draw,font=\small\sffamily}]
  \node[main node] at (4,6) (0) {};
  \node[main node] at (2,4) (l){};
  \node[main node] at (6,4) (r){};
  \node[main node] at (1,2) (ll){};
  \node[main node] at (3,2) (lr){};
  \node[main node] at (0.5,0) (lll){};
  \node[main node] at (1.5,0) (llr){};
  \path[every node/.style={font=\sffamily}]
    (0) edge (l)
    (0) edge (r)
    (l) edge (ll)
    (l) edge (lr)
    (ll) edge (lll)
    (ll) edge (llr)
    ;
\end{tikzpicture}
&
\begin{tikzpicture}
[scale=0.329,thick,main node/.style={circle,inner sep=0.4mm,draw,font=\small\sffamily}]
  \node[main node] at (4,6) (0) {};
  \node[main node] at (2,4) (l){};
  \node[main node] at (6,4) (r){};
  \node[main node] at (1,2) (ll){};
  \node[main node] at (3,2) (lr){};
  \node[main node] at (2.5,0) (lrl){};
  \node[main node] at (3.5,0) (lrr){};
  \path[every node/.style={font=\sffamily}]
    (0) edge (l)
    (0) edge (r)
    (l) edge (ll)
    (l) edge (lr)
    (lr) edge (lrl)
    (lr) edge (lrr)
    ;
\end{tikzpicture}
&
\begin{tikzpicture}
[scale=0.329,thick,main node/.style={circle,inner sep=0.4mm,draw,font=\small\sffamily}]
  \node[main node] at (4,6) (0) {};
  \node[main node] at (2,4) (l){};
  \node[main node] at (6,4) (r){};
  \node[main node] at (1,2) (ll){};
  \node[main node] at (3,2) (lr){};
  \node[main node] at (5,2) (rl){};
  \node[main node] at (7,2) (rr){};
  \node at (0.5,0) (lll){};
  \path[every node/.style={font=\sffamily}]
    (0) edge (l)
    (0) edge (r)
    (l) edge (ll)
    (l) edge (lr)
    (r) edge (rl)
    (r) edge (rr)
    ;
\end{tikzpicture}
&
\begin{tikzpicture}
[scale=0.329,thick,main node/.style={circle,inner sep=0.4mm,draw,font=\small\sffamily}]
  \node[main node] at (4,6) (0) {};
  \node[main node] at (2,4) (l){};
  \node[main node] at (6,4) (r){};
  \node[main node] at (5,2) (rl){};
  \node[main node] at (7,2) (rr){};
  \node[main node] at (4.5,0) (rll){};
  \node[main node] at (5.5,0) (rlr){};
  \path[every node/.style={font=\sffamily}]
    (0) edge (l)
    (0) edge (r)
    (r) edge (rl)
    (r) edge (rr)
    (rl) edge (rll)
    (rl) edge (rlr)
    ;
\end{tikzpicture}
&
\begin{tikzpicture}
[scale=0.329,thick,main node/.style={circle,inner sep=0.4mm,draw,font=\small\sffamily}]
  \node[main node] at (4,6) (0) {};
  \node[main node] at (2,4) (l){};
  \node[main node] at (6,4) (r){};
  \node[main node] at (5,2) (rl){};
  \node[main node] at (7,2) (rr){};
  \node[main node] at (6.5,0) (rrl){};
  \node[main node] at (7.5,0) (rrr){};
  \path[every node/.style={font=\sffamily}]
    (0) edge (l)
    (0) edge (r)
    (r) edge (rl)
    (r) edge (rr)
    (rr) edge (rrl)
    (rr) edge (rrr)
    ;
\end{tikzpicture}
\end{tabular}
\end{center}
\item[(iii)]
$C(n)$ counts the dyadic partitions of the unit interval obtained by $n{-}1$ bisections
into $n$ subintervals \cite{Cannon1,Cannon2}.
For example, the $C(4) = 5$ ways to partition the unit interval into 4 subintervals 
using 3 bisections are as follows:
\begin{center}
\begin{tabular}{c@{\quad\quad\;}c@{\quad\quad\;}c@{\quad\quad\;}c@{\quad\quad\;}c}
\begin{tikzpicture}
[scale=0.25,thick]
\def\x{0.65}
\draw (0,0) -- (8,0);
\draw (0,{\x}) -- (0, {-\x});
\draw (8,{\x}) -- (8, {-\x});
\draw (1,{\x}) -- (1, {-\x});
\draw (2,{\x}) -- (2, {-\x});
\draw (4,{\x}) -- (4, {-\x});
\end{tikzpicture}
&
\begin{tikzpicture}
[scale=0.25,thick]
\def\x{0.65}
\draw (0,0) -- (8,0);
\draw (0,{\x}) -- (0, {-\x});
\draw (8,{\x}) -- (8, {-\x});
\draw (3,{\x}) -- (3, {-\x});
\draw (2,{\x}) -- (2, {-\x});
\draw (4,{\x}) -- (4, {-\x});
\end{tikzpicture}
&
\begin{tikzpicture}
[scale=0.25,thick]
\def\x{0.65}
\draw (0,0) -- (8,0);
\draw (0,{\x}) -- (0, {-\x});
\draw (8,{\x}) -- (8, {-\x});
\draw (2,{\x}) -- (2, {-\x});
\draw (6,{\x}) -- (6, {-\x});
\draw (4,{\x}) -- (4, {-\x});
\end{tikzpicture}
&
\begin{tikzpicture}
[scale=0.25,thick]
\def\x{0.65}
\draw (0,0) -- (8,0);
\draw (0,{\x}) -- (0, {-\x});
\draw (8,{\x}) -- (8, {-\x});
\draw (5,{\x}) -- (5, {-\x});
\draw (6,{\x}) -- (6, {-\x});
\draw (4,{\x}) -- (4, {-\x});
\end{tikzpicture}
&
\begin{tikzpicture}
[scale=0.25,thick]
\def\x{0.65}
\draw (0,0) -- (8,0);
\draw (0,{\x}) -- (0, {-\x});
\draw (8,{\x}) -- (8, {-\x});
\draw (7,{\x}) -- (7, {-\x});
\draw (6,{\x}) -- (6, {-\x});
\draw (4,{\x}) -- (4, {-\x});
\end{tikzpicture}
\end{tabular}
\end{center}
\end{itemize}
If we write $y$ for the generating function of $C(n)$,
\[
y = \sum_{n \ge 1} C(n) x^n = 
x + x^2 + 2 x^3 + 5 x^4 + 14 x^5 + 42 x^6 + 132 x^7 + 429 x^8 + \cdots,
\]
then one can check that $y$ satisfies the functional equation
\begin{equation}\label{CatalanFnEq}
x + y^2 = y.
\end{equation}

\subsection{Geometry of higher-dimensional Catalan numbers}
\label{subsec12}

We now describe a higher-dimensional generalization of the Catalan numbers that was first studied in~\cite{BD}. Therein, they analyzed these numbers mainly from the perspective of algebraic operads, but also provided the following combinatorial description in terms of subdividing the $d$-dimensional open unit hypercube 
using a sequence of $p$-ary partitions, generalizing interpretation (iii) 
for the ordinary Catalan numbers.

\begin{definition}\label{defndecomp}
Fix a dimension $d \ge 1$. 
Consider an open $d$-rectangle in the open unit $d$-cube:
\[
R = ( a_1, b_1 ) \times \cdots \times ( a_d, b_d ) \subseteq (0,1)^d.
\]
Fix an arity $p \ge 2$.
For a fixed index $1 \le i \le d$, partition the $i^{\tn{th}}$ interval $(a_i, b_i)$
in the Cartesian product into $p$ equal subintervals with endpoints
\[
c_i(j) = a_i + \frac{j}{p} ( b_i {-} a_i )
\quad\quad\quad
( 0 \le j \le p ).
\]
We define the \emph{$p$-splitting of $R$ in the coordinate $i$} to be
\[
H_i(R) =
\left\{ \,
( a_1, b_1 )
\times \cdots \times
\big(
c_i(j{-}1),
c_i(j)
\big)
\times \cdots \times
( a_d, b_d )
\mid
1 \le j \le p
\, \right\}.
\]
Fix an integer $m \ge 0$ and set $S_0 = \{ (0,1)^d \}$.
For $k = 1, \dots, m$ perform these steps:
\begin{enumerate}
\item
Choose an element $R \in S_{k-1}$.
\item
Choose a direction $1 \le i \le d$.
\item
Set $S_k = \big( S_{k-1} \setminus \{ R \} \big) \cup H_i(R)$.
\end{enumerate}
The result $S_m$ is called a \emph{$(d,p,n)$-decomposition};
by this we mean
a $p$-ary decomposition of the unit $d$-cube into the disjoint union of $n$ blocks
($d$-subrectangles) where $n = 1 + m(p{-}1)$. 
We define $C_{d,p}(n)$ to be the number of distinct $(d,p,n)$-decompositions.
\end{definition}

Figure~\ref{degrees1234} illustrates all $(2,2,n)$-decompositions for $n \leq 4$. 
From the diagrams, we see that $C_{2,2}(n) = 1, 2, 8, 39$ for $n = 1, 2, 3, 4$ respectively.

\begin{figure}[htb]
\begin{center}
\scalebox{0.75}{$
\begin{array}[t]{ll}
\raisebox{0.8cm}{\scalebox{1.25}{$n=1$\quad}}
&
\begin{xy}
(  0,  0 ) = "1";
(  0, 16 ) = "2";
( 16,  0 ) = "3";
( 16, 16 ) = "4";
{ \ar@{-} "1"; "2" };
{ \ar@{-} "3"; "4" };
{ \ar@{-} "1"; "3" };
{ \ar@{-} "2"; "4" };
\end{xy}
\\[1mm]
\raisebox{0.8cm}{\scalebox{1.25}{$n=2$\quad}}
&
\begin{xy}
(  0,  0 ) = "1";
(  0, 16 ) = "2";
(  8,  0 ) = "3";
(  8, 16 ) = "4";
( 16,  0 ) = "5";
( 16, 16 ) = "6";
{ \ar@{-} "1"; "2" };
{ \ar@{-} "3"; "4" };
{ \ar@{-} "5"; "6" };
{ \ar@{-} "1"; "5" };
{ \ar@{-} "2"; "6" };
\end{xy}
\quad
\begin{xy}
(  0,  0 ) = "1";
(  0,  8 ) = "2";
(  0, 16 ) = "3";
( 16,  0 ) = "4";
( 16,  8 ) = "5";
( 16, 16 ) = "6";
{ \ar@{-} "1"; "3" };
{ \ar@{-} "4"; "6" };
{ \ar@{-} "1"; "4" };
{ \ar@{-} "2"; "5" };
{ \ar@{-} "3"; "6" };
\end{xy}
\\[1mm]
\raisebox{0.8cm}{\scalebox{1.25}{$n=3$\quad}}
&
\begin{xy}
(  0,  0 ) = "1";
(  0, 16 ) = "2";
(  4,  0 ) = "3";
(  4, 16 ) = "4";
(  8,  0 ) = "5";
(  8, 16 ) = "6";
( 16,  0 ) = "7";
( 16, 16 ) = "8";
{ \ar@{-} "1"; "2" };
{ \ar@{-} "3"; "4" };
{ \ar@{-} "5"; "6" };
{ \ar@{-} "7"; "8" };
{ \ar@{-} "1"; "7" };
{ \ar@{-} "2"; "8" };
\end{xy}
\quad
\begin{xy}
(  0,  0 ) = "1";
(  0, 16 ) = "2";
(  8,  0 ) = "3";
(  8, 16 ) = "4";
( 12,  0 ) = "5";
( 12, 16 ) = "6";
( 16,  0 ) = "7";
( 16, 16 ) = "8";
{ \ar@{-} "1"; "2" };
{ \ar@{-} "3"; "4" };
{ \ar@{-} "5"; "6" };
{ \ar@{-} "7"; "8" };
{ \ar@{-} "1"; "7" };
{ \ar@{-} "2"; "8" };
\end{xy}
\quad
\begin{xy}
(  0,  0 ) = "1";
(  0,  4 ) = "2";
(  0,  8 ) = "3";
(  0, 16 ) = "4";
( 16,  0 ) = "5";
( 16,  4 ) = "6";
( 16,  8 ) = "7";
( 16, 16 ) = "8";
{ \ar@{-} "1"; "4" };
{ \ar@{-} "5"; "8" };
{ \ar@{-} "1"; "5" };
{ \ar@{-} "2"; "6" };
{ \ar@{-} "3"; "7" };
{ \ar@{-} "4"; "8" };
\end{xy}
\quad
\begin{xy}
(  0,  0 ) = "1";
(  0,  8 ) = "2";
(  0, 12 ) = "3";
(  0, 16 ) = "4";
( 16,  0 ) = "5";
( 16,  8 ) = "6";
( 16, 12 ) = "7";
( 16, 16 ) = "8";
{ \ar@{-} "1"; "4" };
{ \ar@{-} "5"; "8" };
{ \ar@{-} "1"; "5" };
{ \ar@{-} "2"; "6" };
{ \ar@{-} "3"; "7" };
{ \ar@{-} "4"; "8" };
\end{xy}
\quad
\begin{xy}
(  0,  0 ) = "1";
(  0,  8 ) = "2";
(  0, 16 ) = "3";
(  8,  0 ) = "4";
(  8,  8 ) = "5";
(  8, 16 ) = "6";
( 16,  0 ) = "7";
( 16, 16 ) = "8";
{ \ar@{-} "1"; "3" };
{ \ar@{-} "4"; "6" };
{ \ar@{-} "7"; "8" };
{ \ar@{-} "1"; "7" };
{ \ar@{-} "2"; "5" };
{ \ar@{-} "3"; "8" };
\end{xy}
\quad
\begin{xy}
(  0,  0 ) = "1";
(  0, 16 ) = "2";
(  8,  0 ) = "3";
(  8,  8 ) = "4";
(  8, 16 ) = "5";
( 16,  0 ) = "6";
( 16,  8 ) = "7";
( 16, 16 ) = "8";
{ \ar@{-} "1"; "2" };
{ \ar@{-} "3"; "5" };
{ \ar@{-} "6"; "8" };
{ \ar@{-} "1"; "6" };
{ \ar@{-} "2"; "8" };
{ \ar@{-} "4"; "7" };
\end{xy}
\quad
\begin{xy}
(  0,  0 ) = "1";
(  0,  8 ) = "2";
(  0, 16 ) = "3";
(  8,  0 ) = "4";
(  8,  8 ) = "5";
( 16,  0 ) = "6";
( 16,  8 ) = "7";
( 16, 16 ) = "8";
{ \ar@{-} "1"; "3" };
{ \ar@{-} "4"; "5" };
{ \ar@{-} "6"; "8" };
{ \ar@{-} "1"; "6" };
{ \ar@{-} "2"; "7" };
{ \ar@{-} "3"; "8" };
\end{xy}
\quad
\begin{xy}
(  0,  0 ) = "1";
(  0,  8 ) = "2";
(  0, 16 ) = "3";
(  8,  8 ) = "4";
(  8, 16 ) = "5";
( 16,  0 ) = "6";
( 16,  8 ) = "7";
( 16, 16 ) = "8";
{ \ar@{-} "1"; "3" };
{ \ar@{-} "4"; "5" };
{ \ar@{-} "6"; "8" };
{ \ar@{-} "1"; "6" };
{ \ar@{-} "2"; "7" };
{ \ar@{-} "3"; "8" };
\end{xy}
\\[1mm]
\raisebox{0.8cm}{\scalebox{1.25}{$n=4$\quad}}
&
\begin{xy}
(  0,  0 ) = " 1";
(  0, 16 ) = " 2";
(  2,  0 ) = " 3";
(  2, 16 ) = " 4";
{ \ar@{-} " 1"; " 2" };
{ \ar@{-} " 3"; " 4" };
{ \ar@{-} " 1"; " 3" };
{ \ar@{-} " 2"; " 4" };
(  2,  0 ) = " 5";
(  2, 16 ) = " 6";
(  4,  0 ) = " 7";
(  4, 16 ) = " 8";
{ \ar@{-} " 5"; " 6" };
{ \ar@{-} " 7"; " 8" };
{ \ar@{-} " 5"; " 7" };
{ \ar@{-} " 6"; " 8" };
(  4,  0 ) = " 9";
(  4, 16 ) = "10";
(  8,  0 ) = "11";
(  8, 16 ) = "12";
{ \ar@{-} " 9"; "10" };
{ \ar@{-} "11"; "12" };
{ \ar@{-} " 9"; "11" };
{ \ar@{-} "10"; "12" };
(  8,  0 ) = "13";
(  8, 16 ) = "14";
( 16,  0 ) = "15";
( 16, 16 ) = "16";
{ \ar@{-} "13"; "14" };
{ \ar@{-} "15"; "16" };
{ \ar@{-} "13"; "15" };
{ \ar@{-} "14"; "16" };
\end{xy}
\quad
\begin{xy}
(  0,  0 ) = " 1";
(  0,  8 ) = " 2";
(  4,  0 ) = " 3";
(  4,  8 ) = " 4";
{ \ar@{-} " 1"; " 2" };
{ \ar@{-} " 3"; " 4" };
{ \ar@{-} " 1"; " 3" };
{ \ar@{-} " 2"; " 4" };
(  0,  8 ) = " 5";
(  0, 16 ) = " 6";
(  4,  8 ) = " 7";
(  4, 16 ) = " 8";
{ \ar@{-} " 5"; " 6" };
{ \ar@{-} " 7"; " 8" };
{ \ar@{-} " 5"; " 7" };
{ \ar@{-} " 6"; " 8" };
(  4,  0 ) = " 9";
(  4, 16 ) = "10";
(  8,  0 ) = "11";
(  8, 16 ) = "12";
{ \ar@{-} " 9"; "10" };
{ \ar@{-} "11"; "12" };
{ \ar@{-} " 9"; "11" };
{ \ar@{-} "10"; "12" };
(  8,  0 ) = "13";
(  8, 16 ) = "14";
( 16,  0 ) = "15";
( 16, 16 ) = "16";
{ \ar@{-} "13"; "14" };
{ \ar@{-} "15"; "16" };
{ \ar@{-} "13"; "15" };
{ \ar@{-} "14"; "16" };
\end{xy}
\quad
\begin{xy}
(  0,  0 ) = " 1";
(  0,  8 ) = " 2";
(  4,  0 ) = " 3";
(  4,  8 ) = " 4";
{ \ar@{-} " 1"; " 2" };
{ \ar@{-} " 3"; " 4" };
{ \ar@{-} " 1"; " 3" };
{ \ar@{-} " 2"; " 4" };
(  0,  8 ) = " 5";
(  0, 16 ) = " 6";
(  8,  8 ) = " 7";
(  8, 16 ) = " 8";
{ \ar@{-} " 5"; " 6" };
{ \ar@{-} " 7"; " 8" };
{ \ar@{-} " 5"; " 7" };
{ \ar@{-} " 6"; " 8" };
(  4,  0 ) = " 9";
(  4,  8 ) = "10";
(  8,  0 ) = "11";
(  8,  8 ) = "12";
{ \ar@{-} " 9"; "10" };
{ \ar@{-} "11"; "12" };
{ \ar@{-} " 9"; "11" };
{ \ar@{-} "10"; "12" };
(  8,  0 ) = "13";
(  8, 16 ) = "14";
( 16,  0 ) = "15";
( 16, 16 ) = "16";
{ \ar@{-} "13"; "14" };
{ \ar@{-} "15"; "16" };
{ \ar@{-} "13"; "15" };
{ \ar@{-} "14"; "16" };
\end{xy}
\quad
\begin{xy}
(  0,  0 ) = " 1";
(  0,  8 ) = " 2";
(  4,  0 ) = " 3";
(  4,  8 ) = " 4";
{ \ar@{-} " 1"; " 2" };
{ \ar@{-} " 3"; " 4" };
{ \ar@{-} " 1"; " 3" };
{ \ar@{-} " 2"; " 4" };
(  0,  8 ) = " 5";
(  0, 16 ) = " 6";
( 16,  8 ) = " 7";
( 16, 16 ) = " 8";
{ \ar@{-} " 5"; " 6" };
{ \ar@{-} " 7"; " 8" };
{ \ar@{-} " 5"; " 7" };
{ \ar@{-} " 6"; " 8" };
(  4,  0 ) = " 9";
(  4,  8 ) = "10";
(  8,  0 ) = "11";
(  8,  8 ) = "12";
{ \ar@{-} " 9"; "10" };
{ \ar@{-} "11"; "12" };
{ \ar@{-} " 9"; "11" };
{ \ar@{-} "10"; "12" };
(  8,  0 ) = "13";
(  8,  8 ) = "14";
( 16,  0 ) = "15";
( 16,  8 ) = "16";
{ \ar@{-} "13"; "14" };
{ \ar@{-} "15"; "16" };
{ \ar@{-} "13"; "15" };
{ \ar@{-} "14"; "16" };
\end{xy}
\quad
\begin{xy}
(  0,  0 ) = " 1";
(  0, 16 ) = " 2";
(  4,  0 ) = " 3";
(  4, 16 ) = " 4";
{ \ar@{-} " 1"; " 2" };
{ \ar@{-} " 3"; " 4" };
{ \ar@{-} " 1"; " 3" };
{ \ar@{-} " 2"; " 4" };
(  4,  0 ) = " 5";
(  4, 16 ) = " 6";
(  6,  0 ) = " 7";
(  6, 16 ) = " 8";
{ \ar@{-} " 5"; " 6" };
{ \ar@{-} " 7"; " 8" };
{ \ar@{-} " 5"; " 7" };
{ \ar@{-} " 6"; " 8" };
(  6,  0 ) = " 9";
(  6, 16 ) = "10";
(  8,  0 ) = "11";
(  8, 16 ) = "12";
{ \ar@{-} " 9"; "10" };
{ \ar@{-} "11"; "12" };
{ \ar@{-} " 9"; "11" };
{ \ar@{-} "10"; "12" };
(  8,  0 ) = "13";
(  8, 16 ) = "14";
( 16,  0 ) = "15";
( 16, 16 ) = "16";
{ \ar@{-} "13"; "14" };
{ \ar@{-} "15"; "16" };
{ \ar@{-} "13"; "15" };
{ \ar@{-} "14"; "16" };
\end{xy}
\quad
\begin{xy}
(  0,  0 ) = " 1";
(  0, 16 ) = " 2";
(  4,  0 ) = " 3";
(  4, 16 ) = " 4";
{ \ar@{-} " 1"; " 2" };
{ \ar@{-} " 3"; " 4" };
{ \ar@{-} " 1"; " 3" };
{ \ar@{-} " 2"; " 4" };
(  4,  0 ) = " 5";
(  4,  8 ) = " 6";
(  8,  0 ) = " 7";
(  8,  8 ) = " 8";
{ \ar@{-} " 5"; " 6" };
{ \ar@{-} " 7"; " 8" };
{ \ar@{-} " 5"; " 7" };
{ \ar@{-} " 6"; " 8" };
(  4,  8 ) = " 9";
(  4, 16 ) = "10";
(  8,  8 ) = "11";
(  8, 16 ) = "12";
{ \ar@{-} " 9"; "10" };
{ \ar@{-} "11"; "12" };
{ \ar@{-} " 9"; "11" };
{ \ar@{-} "10"; "12" };
(  8,  0 ) = "13";
(  8, 16 ) = "14";
( 16,  0 ) = "15";
( 16, 16 ) = "16";
{ \ar@{-} "13"; "14" };
{ \ar@{-} "15"; "16" };
{ \ar@{-} "13"; "15" };
{ \ar@{-} "14"; "16" };
\end{xy}
\quad
\begin{xy}
(  0,  0 ) = " 1";
(  0, 16 ) = " 2";
(  4,  0 ) = " 3";
(  4, 16 ) = " 4";
{ \ar@{-} " 1"; " 2" };
{ \ar@{-} " 3"; " 4" };
{ \ar@{-} " 1"; " 3" };
{ \ar@{-} " 2"; " 4" };
(  4,  0 ) = " 5";
(  4, 16 ) = " 6";
(  8,  0 ) = " 7";
(  8, 16 ) = " 8";
{ \ar@{-} " 5"; " 6" };
{ \ar@{-} " 7"; " 8" };
{ \ar@{-} " 5"; " 7" };
{ \ar@{-} " 6"; " 8" };
(  8,  0 ) = " 9";
(  8, 16 ) = "10";
( 12,  0 ) = "11";
( 12, 16 ) = "12";
{ \ar@{-} " 9"; "10" };
{ \ar@{-} "11"; "12" };
{ \ar@{-} " 9"; "11" };
{ \ar@{-} "10"; "12" };
( 12,  0 ) = "13";
( 12, 16 ) = "14";
( 16,  0 ) = "15";
( 16, 16 ) = "16";
{ \ar@{-} "13"; "14" };
{ \ar@{-} "15"; "16" };
{ \ar@{-} "13"; "15" };
{ \ar@{-} "14"; "16" };
\end{xy}
\quad
\begin{xy}
(  0,  0 ) = " 1";
(  0, 16 ) = " 2";
(  4,  0 ) = " 3";
(  4, 16 ) = " 4";
{ \ar@{-} " 1"; " 2" };
{ \ar@{-} " 3"; " 4" };
{ \ar@{-} " 1"; " 3" };
{ \ar@{-} " 2"; " 4" };
(  4,  0 ) = " 5";
(  4, 16 ) = " 6";
(  8,  0 ) = " 7";
(  8, 16 ) = " 8";
{ \ar@{-} " 5"; " 6" };
{ \ar@{-} " 7"; " 8" };
{ \ar@{-} " 5"; " 7" };
{ \ar@{-} " 6"; " 8" };
(  8,  0 ) = " 9";
(  8,  8 ) = "10";
( 16,  0 ) = "11";
( 16,  8 ) = "12";
{ \ar@{-} " 9"; "10" };
{ \ar@{-} "11"; "12" };
{ \ar@{-} " 9"; "11" };
{ \ar@{-} "10"; "12" };
(  8,  8 ) = "13";
(  8, 16 ) = "14";
( 16,  8 ) = "15";
( 16, 16 ) = "16";
{ \ar@{-} "13"; "14" };
{ \ar@{-} "15"; "16" };
{ \ar@{-} "13"; "15" };
{ \ar@{-} "14"; "16" };
\end{xy}
\\ [1mm] 
&\begin{xy}
(  0,  8 ) = " 1";
(  0, 16 ) = " 2";
(  4,  8 ) = " 3";
(  4, 16 ) = " 4";
{ \ar@{-} " 1"; " 2" };
{ \ar@{-} " 3"; " 4" };
{ \ar@{-} " 1"; " 3" };
{ \ar@{-} " 2"; " 4" };
(  0,  0 ) = " 5";
(  0,  8 ) = " 6";
(  8,  0 ) = " 7";
(  8,  8 ) = " 8";
{ \ar@{-} " 5"; " 6" };
{ \ar@{-} " 7"; " 8" };
{ \ar@{-} " 5"; " 7" };
{ \ar@{-} " 6"; " 8" };
(  4,  8 ) = " 9";
(  4, 16 ) = "10";
(  8,  8 ) = "11";
(  8, 16 ) = "12";
{ \ar@{-} " 9"; "10" };
{ \ar@{-} "11"; "12" };
{ \ar@{-} " 9"; "11" };
{ \ar@{-} "10"; "12" };
(  8,  0 ) = "13";
(  8, 16 ) = "14";
( 16,  0 ) = "15";
( 16, 16 ) = "16";
{ \ar@{-} "13"; "14" };
{ \ar@{-} "15"; "16" };
{ \ar@{-} "13"; "15" };
{ \ar@{-} "14"; "16" };
\end{xy}
\quad
\begin{xy}
(  0,  8 ) = " 1";
(  0, 16 ) = " 2";
(  4,  8 ) = " 3";
(  4, 16 ) = " 4";
{ \ar@{-} " 1"; " 2" };
{ \ar@{-} " 3"; " 4" };
{ \ar@{-} " 1"; " 3" };
{ \ar@{-} " 2"; " 4" };
(  0,  0 ) = " 5";
(  0,  8 ) = " 6";
( 16,  0 ) = " 7";
( 16,  8 ) = " 8";
{ \ar@{-} " 5"; " 6" };
{ \ar@{-} " 7"; " 8" };
{ \ar@{-} " 5"; " 7" };
{ \ar@{-} " 6"; " 8" };
(  4,  8 ) = " 9";
(  4, 16 ) = "10";
(  8,  8 ) = "11";
(  8, 16 ) = "12";
{ \ar@{-} " 9"; "10" };
{ \ar@{-} "11"; "12" };
{ \ar@{-} " 9"; "11" };
{ \ar@{-} "10"; "12" };
(  8,  8 ) = "13";
(  8, 16 ) = "14";
( 16,  8 ) = "15";
( 16, 16 ) = "16";
{ \ar@{-} "13"; "14" };
{ \ar@{-} "15"; "16" };
{ \ar@{-} "13"; "15" };
{ \ar@{-} "14"; "16" };
\end{xy}
\quad
\begin{xy}
(  0,  0 ) = " 1";
(  0,  4 ) = " 2";
(  8,  0 ) = " 3";
(  8,  4 ) = " 4";
{ \ar@{-} " 1"; " 2" };
{ \ar@{-} " 3"; " 4" };
{ \ar@{-} " 1"; " 3" };
{ \ar@{-} " 2"; " 4" };
(  0,  4 ) = " 5";
(  0,  8 ) = " 6";
(  8,  4 ) = " 7";
(  8,  8 ) = " 8";
{ \ar@{-} " 5"; " 6" };
{ \ar@{-} " 7"; " 8" };
{ \ar@{-} " 5"; " 7" };
{ \ar@{-} " 6"; " 8" };
(  0,  8 ) = " 9";
(  0, 16 ) = "10";
(  8,  8 ) = "11";
(  8, 16 ) = "12";
{ \ar@{-} " 9"; "10" };
{ \ar@{-} "11"; "12" };
{ \ar@{-} " 9"; "11" };
{ \ar@{-} "10"; "12" };
(  8,  0 ) = "13";
(  8, 16 ) = "14";
( 16,  0 ) = "15";
( 16, 16 ) = "16";
{ \ar@{-} "13"; "14" };
{ \ar@{-} "15"; "16" };
{ \ar@{-} "13"; "15" };
{ \ar@{-} "14"; "16" };
\end{xy}
\quad
\begin{xy}
(  0,  0 ) = " 1";
(  0,  4 ) = " 2";
(  8,  0 ) = " 3";
(  8,  4 ) = " 4";
{ \ar@{-} " 1"; " 2" };
{ \ar@{-} " 3"; " 4" };
{ \ar@{-} " 1"; " 3" };
{ \ar@{-} " 2"; " 4" };
(  0,  4 ) = " 5";
(  0,  8 ) = " 6";
(  8,  4 ) = " 7";
(  8,  8 ) = " 8";
{ \ar@{-} " 5"; " 6" };
{ \ar@{-} " 7"; " 8" };
{ \ar@{-} " 5"; " 7" };
{ \ar@{-} " 6"; " 8" };
(  0,  8 ) = " 9";
(  0, 16 ) = "10";
( 16,  8 ) = "11";
( 16, 16 ) = "12";
{ \ar@{-} " 9"; "10" };
{ \ar@{-} "11"; "12" };
{ \ar@{-} " 9"; "11" };
{ \ar@{-} "10"; "12" };
(  8,  0 ) = "13";
(  8,  8 ) = "14";
( 16,  0 ) = "15";
( 16,  8 ) = "16";
{ \ar@{-} "13"; "14" };
{ \ar@{-} "15"; "16" };
{ \ar@{-} "13"; "15" };
{ \ar@{-} "14"; "16" };
\end{xy}
\quad
\begin{xy}
(  0,  0 ) = " 1";
(  0,  4 ) = " 2";
(  8,  0 ) = " 3";
(  8,  4 ) = " 4";
{ \ar@{-} " 1"; " 2" };
{ \ar@{-} " 3"; " 4" };
{ \ar@{-} " 1"; " 3" };
{ \ar@{-} " 2"; " 4" };
(  0,  4 ) = " 5";
(  0,  8 ) = " 6";
( 16,  4 ) = " 7";
( 16,  8 ) = " 8";
{ \ar@{-} " 5"; " 6" };
{ \ar@{-} " 7"; " 8" };
{ \ar@{-} " 5"; " 7" };
{ \ar@{-} " 6"; " 8" };
(  0,  8 ) = " 9";
(  0, 16 ) = "10";
( 16,  8 ) = "11";
( 16, 16 ) = "12";
{ \ar@{-} " 9"; "10" };
{ \ar@{-} "11"; "12" };
{ \ar@{-} " 9"; "11" };
{ \ar@{-} "10"; "12" };
(  8,  0 ) = "13";
(  8,  4 ) = "14";
( 16,  0 ) = "15";
( 16,  4 ) = "16";
{ \ar@{-} "13"; "14" };
{ \ar@{-} "15"; "16" };
{ \ar@{-} "13"; "15" };
{ \ar@{-} "14"; "16" };
\end{xy}
\quad
\begin{xy}
(  0,  0 ) = " 1";
(  0,  8 ) = " 2";
(  8,  0 ) = " 3";
(  8,  8 ) = " 4";
{ \ar@{-} " 1"; " 2" };
{ \ar@{-} " 3"; " 4" };
{ \ar@{-} " 1"; " 3" };
{ \ar@{-} " 2"; " 4" };
(  0,  8 ) = " 5";
(  0, 12 ) = " 6";
(  8,  8 ) = " 7";
(  8, 12 ) = " 8";
{ \ar@{-} " 5"; " 6" };
{ \ar@{-} " 7"; " 8" };
{ \ar@{-} " 5"; " 7" };
{ \ar@{-} " 6"; " 8" };
(  0, 12 ) = " 9";
(  0, 16 ) = "10";
(  8, 12 ) = "11";
(  8, 16 ) = "12";
{ \ar@{-} " 9"; "10" };
{ \ar@{-} "11"; "12" };
{ \ar@{-} " 9"; "11" };
{ \ar@{-} "10"; "12" };
(  8,  0 ) = "13";
(  8, 16 ) = "14";
( 16,  0 ) = "15";
( 16, 16 ) = "16";
{ \ar@{-} "13"; "14" };
{ \ar@{-} "15"; "16" };
{ \ar@{-} "13"; "15" };
{ \ar@{-} "14"; "16" };
\end{xy}
\quad
\begin{xy}
(  0,  0 ) = " 1";
(  0,  8 ) = " 2";
(  8,  0 ) = " 3";
(  8,  8 ) = " 4";
{ \ar@{-} " 1"; " 2" };
{ \ar@{-} " 3"; " 4" };
{ \ar@{-} " 1"; " 3" };
{ \ar@{-} " 2"; " 4" };
(  0,  8 ) = " 5";
(  0, 16 ) = " 6";
(  8,  8 ) = " 7";
(  8, 16 ) = " 8";
{ \ar@{-} " 5"; " 6" };
{ \ar@{-} " 7"; " 8" };
{ \ar@{-} " 5"; " 7" };
{ \ar@{-} " 6"; " 8" };
(  8,  0 ) = " 9";
(  8, 16 ) = "10";
( 12,  0 ) = "11";
( 12, 16 ) = "12";
{ \ar@{-} " 9"; "10" };
{ \ar@{-} "11"; "12" };
{ \ar@{-} " 9"; "11" };
{ \ar@{-} "10"; "12" };
( 12,  0 ) = "13";
( 12, 16 ) = "14";
( 16,  0 ) = "15";
( 16, 16 ) = "16";
{ \ar@{-} "13"; "14" };
{ \ar@{-} "15"; "16" };
{ \ar@{-} "13"; "15" };
{ \ar@{-} "14"; "16" };
\end{xy}
\quad
\begin{xy}
(  0,  0 ) = " 1";
(  0,  8 ) = " 2";
(  8,  0 ) = " 3";
(  8,  8 ) = " 4";
{ \ar@{-} " 1"; " 2" };
{ \ar@{-} " 3"; " 4" };
{ \ar@{-} " 1"; " 3" };
{ \ar@{-} " 2"; " 4" };
(  0,  8 ) = " 5";
(  0, 16 ) = " 6";
(  8,  8 ) = " 7";
(  8, 16 ) = " 8";
{ \ar@{-} " 5"; " 6" };
{ \ar@{-} " 7"; " 8" };
{ \ar@{-} " 5"; " 7" };
{ \ar@{-} " 6"; " 8" };
(  8,  0 ) = " 9";
(  8,  8 ) = "10";
( 16,  0 ) = "11";
( 16,  8 ) = "12";
{ \ar@{-} " 9"; "10" };
{ \ar@{-} "11"; "12" };
{ \ar@{-} " 9"; "11" };
{ \ar@{-} "10"; "12" };
(  8,  8 ) = "13";
(  8, 16 ) = "14";
( 16,  8 ) = "15";
( 16, 16 ) = "16";
{ \ar@{-} "13"; "14" };
{ \ar@{-} "15"; "16" };
{ \ar@{-} "13"; "15" };
{ \ar@{-} "14"; "16" };
\end{xy}
\\ [1mm] 
&\begin{xy}
(  0,  0 ) = " 1";
(  0,  8 ) = " 2";
(  8,  0 ) = " 3";
(  8,  8 ) = " 4";
{ \ar@{-} " 1"; " 2" };
{ \ar@{-} " 3"; " 4" };
{ \ar@{-} " 1"; " 3" };
{ \ar@{-} " 2"; " 4" };
(  0,  8 ) = " 5";
(  0, 12 ) = " 6";
( 16,  8 ) = " 7";
( 16, 12 ) = " 8";
{ \ar@{-} " 5"; " 6" };
{ \ar@{-} " 7"; " 8" };
{ \ar@{-} " 5"; " 7" };
{ \ar@{-} " 6"; " 8" };
(  0, 12 ) = " 9";
(  0, 16 ) = "10";
( 16, 12 ) = "11";
( 16, 16 ) = "12";
{ \ar@{-} " 9"; "10" };
{ \ar@{-} "11"; "12" };
{ \ar@{-} " 9"; "11" };
{ \ar@{-} "10"; "12" };
(  8,  0 ) = "13";
(  8,  8 ) = "14";
( 16,  0 ) = "15";
( 16,  8 ) = "16";
{ \ar@{-} "13"; "14" };
{ \ar@{-} "15"; "16" };
{ \ar@{-} "13"; "15" };
{ \ar@{-} "14"; "16" };
\end{xy}
\quad
\begin{xy}
(  0,  0 ) = " 1";
(  0,  8 ) = " 2";
(  8,  0 ) = " 3";
(  8,  8 ) = " 4";
{ \ar@{-} " 1"; " 2" };
{ \ar@{-} " 3"; " 4" };
{ \ar@{-} " 1"; " 3" };
{ \ar@{-} " 2"; " 4" };
(  0,  8 ) = " 5";
(  0, 16 ) = " 6";
( 16,  8 ) = " 7";
( 16, 16 ) = " 8";
{ \ar@{-} " 5"; " 6" };
{ \ar@{-} " 7"; " 8" };
{ \ar@{-} " 5"; " 7" };
{ \ar@{-} " 6"; " 8" };
(  8,  0 ) = " 9";
(  8,  8 ) = "10";
( 12,  0 ) = "11";
( 12,  8 ) = "12";
{ \ar@{-} " 9"; "10" };
{ \ar@{-} "11"; "12" };
{ \ar@{-} " 9"; "11" };
{ \ar@{-} "10"; "12" };
( 12,  0 ) = "13";
( 12,  8 ) = "14";
( 16,  0 ) = "15";
( 16,  8 ) = "16";
{ \ar@{-} "13"; "14" };
{ \ar@{-} "15"; "16" };
{ \ar@{-} "13"; "15" };
{ \ar@{-} "14"; "16" };
\end{xy}
\quad
\begin{xy}
(  0,  0 ) = " 1";
(  0,  8 ) = " 2";
(  8,  0 ) = " 3";
(  8,  8 ) = " 4";
{ \ar@{-} " 1"; " 2" };
{ \ar@{-} " 3"; " 4" };
{ \ar@{-} " 1"; " 3" };
{ \ar@{-} " 2"; " 4" };
(  0,  8 ) = " 5";
(  0, 16 ) = " 6";
( 16,  8 ) = " 7";
( 16, 16 ) = " 8";
{ \ar@{-} " 5"; " 6" };
{ \ar@{-} " 7"; " 8" };
{ \ar@{-} " 5"; " 7" };
{ \ar@{-} " 6"; " 8" };
(  8,  0 ) = " 9";
(  8,  4 ) = "10";
( 16,  0 ) = "11";
( 16,  4 ) = "12";
{ \ar@{-} " 9"; "10" };
{ \ar@{-} "11"; "12" };
{ \ar@{-} " 9"; "11" };
{ \ar@{-} "10"; "12" };
(  8,  4 ) = "13";
(  8,  8 ) = "14";
( 16,  4 ) = "15";
( 16,  8 ) = "16";
{ \ar@{-} "13"; "14" };
{ \ar@{-} "15"; "16" };
{ \ar@{-} "13"; "15" };
{ \ar@{-} "14"; "16" };
\end{xy}
\quad
\begin{xy}
(  0,  0 ) = " 1";
(  0, 16 ) = " 2";
(  8,  0 ) = " 3";
(  8, 16 ) = " 4";
{ \ar@{-} " 1"; " 2" };
{ \ar@{-} " 3"; " 4" };
{ \ar@{-} " 1"; " 3" };
{ \ar@{-} " 2"; " 4" };
(  8,  0 ) = " 5";
(  8, 16 ) = " 6";
( 10,  0 ) = " 7";
( 10, 16 ) = " 8";
{ \ar@{-} " 5"; " 6" };
{ \ar@{-} " 7"; " 8" };
{ \ar@{-} " 5"; " 7" };
{ \ar@{-} " 6"; " 8" };
( 10,  0 ) = " 9";
( 10, 16 ) = "10";
( 12,  0 ) = "11";
( 12, 16 ) = "12";
{ \ar@{-} " 9"; "10" };
{ \ar@{-} "11"; "12" };
{ \ar@{-} " 9"; "11" };
{ \ar@{-} "10"; "12" };
( 12,  0 ) = "13";
( 12, 16 ) = "14";
( 16,  0 ) = "15";
( 16, 16 ) = "16";
{ \ar@{-} "13"; "14" };
{ \ar@{-} "15"; "16" };
{ \ar@{-} "13"; "15" };
{ \ar@{-} "14"; "16" };
\end{xy}
\quad
\begin{xy}
(  0,  0 ) = " 1";
(  0, 16 ) = " 2";
(  8,  0 ) = " 3";
(  8, 16 ) = " 4";
{ \ar@{-} " 1"; " 2" };
{ \ar@{-} " 3"; " 4" };
{ \ar@{-} " 1"; " 3" };
{ \ar@{-} " 2"; " 4" };
(  8,  0 ) = " 5";
(  8,  8 ) = " 6";
( 12,  0 ) = " 7";
( 12,  8 ) = " 8";
{ \ar@{-} " 5"; " 6" };
{ \ar@{-} " 7"; " 8" };
{ \ar@{-} " 5"; " 7" };
{ \ar@{-} " 6"; " 8" };
(  8,  8 ) = " 9";
(  8, 16 ) = "10";
( 12,  8 ) = "11";
( 12, 16 ) = "12";
{ \ar@{-} " 9"; "10" };
{ \ar@{-} "11"; "12" };
{ \ar@{-} " 9"; "11" };
{ \ar@{-} "10"; "12" };
( 12,  0 ) = "13";
( 12, 16 ) = "14";
( 16,  0 ) = "15";
( 16, 16 ) = "16";
{ \ar@{-} "13"; "14" };
{ \ar@{-} "15"; "16" };
{ \ar@{-} "13"; "15" };
{ \ar@{-} "14"; "16" };
\end{xy}
\quad
\begin{xy}
(  0,  0 ) = " 1";
(  0, 16 ) = " 2";
(  8,  0 ) = " 3";
(  8, 16 ) = " 4";
{ \ar@{-} " 1"; " 2" };
{ \ar@{-} " 3"; " 4" };
{ \ar@{-} " 1"; " 3" };
{ \ar@{-} " 2"; " 4" };
(  8,  0 ) = " 5";
(  8,  8 ) = " 6";
( 12,  0 ) = " 7";
( 12,  8 ) = " 8";
{ \ar@{-} " 5"; " 6" };
{ \ar@{-} " 7"; " 8" };
{ \ar@{-} " 5"; " 7" };
{ \ar@{-} " 6"; " 8" };
(  8,  8 ) = " 9";
(  8, 16 ) = "10";
( 16,  8 ) = "11";
( 16, 16 ) = "12";
{ \ar@{-} " 9"; "10" };
{ \ar@{-} "11"; "12" };
{ \ar@{-} " 9"; "11" };
{ \ar@{-} "10"; "12" };
( 12,  0 ) = "13";
( 12,  8 ) = "14";
( 16,  0 ) = "15";
( 16,  8 ) = "16";
{ \ar@{-} "13"; "14" };
{ \ar@{-} "15"; "16" };
{ \ar@{-} "13"; "15" };
{ \ar@{-} "14"; "16" };
\end{xy}
\quad
\begin{xy}
(  0,  0 ) = " 1";
(  0, 16 ) = " 2";
(  8,  0 ) = " 3";
(  8, 16 ) = " 4";
{ \ar@{-} " 1"; " 2" };
{ \ar@{-} " 3"; " 4" };
{ \ar@{-} " 1"; " 3" };
{ \ar@{-} " 2"; " 4" };
(  8,  0 ) = " 5";
(  8, 16 ) = " 6";
( 12,  0 ) = " 7";
( 12, 16 ) = " 8";
{ \ar@{-} " 5"; " 6" };
{ \ar@{-} " 7"; " 8" };
{ \ar@{-} " 5"; " 7" };
{ \ar@{-} " 6"; " 8" };
( 12,  0 ) = " 9";
( 12, 16 ) = "10";
( 14,  0 ) = "11";
( 14, 16 ) = "12";
{ \ar@{-} " 9"; "10" };
{ \ar@{-} "11"; "12" };
{ \ar@{-} " 9"; "11" };
{ \ar@{-} "10"; "12" };
( 14,  0 ) = "13";
( 14, 16 ) = "14";
( 16,  0 ) = "15";
( 16, 16 ) = "16";
{ \ar@{-} "13"; "14" };
{ \ar@{-} "15"; "16" };
{ \ar@{-} "13"; "15" };
{ \ar@{-} "14"; "16" };
\end{xy}
\quad
\begin{xy}
(  0,  0 ) = " 1";
(  0, 16 ) = " 2";
(  8,  0 ) = " 3";
(  8, 16 ) = " 4";
{ \ar@{-} " 1"; " 2" };
{ \ar@{-} " 3"; " 4" };
{ \ar@{-} " 1"; " 3" };
{ \ar@{-} " 2"; " 4" };
(  8,  0 ) = " 5";
(  8, 16 ) = " 6";
( 12,  0 ) = " 7";
( 12, 16 ) = " 8";
{ \ar@{-} " 5"; " 6" };
{ \ar@{-} " 7"; " 8" };
{ \ar@{-} " 5"; " 7" };
{ \ar@{-} " 6"; " 8" };
( 12,  0 ) = " 9";
( 12,  8 ) = "10";
( 16,  0 ) = "11";
( 16,  8 ) = "12";
{ \ar@{-} " 9"; "10" };
{ \ar@{-} "11"; "12" };
{ \ar@{-} " 9"; "11" };
{ \ar@{-} "10"; "12" };
( 12,  8 ) = "13";
( 12, 16 ) = "14";
( 16,  8 ) = "15";
( 16, 16 ) = "16";
{ \ar@{-} "13"; "14" };
{ \ar@{-} "15"; "16" };
{ \ar@{-} "13"; "15" };
{ \ar@{-} "14"; "16" };
\end{xy}
\\ [1mm] 
&\begin{xy}
(  0,  0 ) = " 1";
(  0, 16 ) = " 2";
(  8,  0 ) = " 3";
(  8, 16 ) = " 4";
{ \ar@{-} " 1"; " 2" };
{ \ar@{-} " 3"; " 4" };
{ \ar@{-} " 1"; " 3" };
{ \ar@{-} " 2"; " 4" };
(  8,  8 ) = " 5";
(  8, 16 ) = " 6";
( 12,  8 ) = " 7";
( 12, 16 ) = " 8";
{ \ar@{-} " 5"; " 6" };
{ \ar@{-} " 7"; " 8" };
{ \ar@{-} " 5"; " 7" };
{ \ar@{-} " 6"; " 8" };
(  8,  0 ) = " 9";
(  8,  8 ) = "10";
( 16,  0 ) = "11";
( 16,  8 ) = "12";
{ \ar@{-} " 9"; "10" };
{ \ar@{-} "11"; "12" };
{ \ar@{-} " 9"; "11" };
{ \ar@{-} "10"; "12" };
( 12,  8 ) = "13";
( 12, 16 ) = "14";
( 16,  8 ) = "15";
( 16, 16 ) = "16";
{ \ar@{-} "13"; "14" };
{ \ar@{-} "15"; "16" };
{ \ar@{-} "13"; "15" };
{ \ar@{-} "14"; "16" };
\end{xy}
\quad
\begin{xy}
(  0,  0 ) = " 1";
(  0, 16 ) = " 2";
(  8,  0 ) = " 3";
(  8, 16 ) = " 4";
{ \ar@{-} " 1"; " 2" };
{ \ar@{-} " 3"; " 4" };
{ \ar@{-} " 1"; " 3" };
{ \ar@{-} " 2"; " 4" };
(  8,  0 ) = " 5";
(  8,  4 ) = " 6";
( 16,  0 ) = " 7";
( 16,  4 ) = " 8";
{ \ar@{-} " 5"; " 6" };
{ \ar@{-} " 7"; " 8" };
{ \ar@{-} " 5"; " 7" };
{ \ar@{-} " 6"; " 8" };
(  8,  4 ) = " 9";
(  8,  8 ) = "10";
( 16,  4 ) = "11";
( 16,  8 ) = "12";
{ \ar@{-} " 9"; "10" };
{ \ar@{-} "11"; "12" };
{ \ar@{-} " 9"; "11" };
{ \ar@{-} "10"; "12" };
(  8,  8 ) = "13";
(  8, 16 ) = "14";
( 16,  8 ) = "15";
( 16, 16 ) = "16";
{ \ar@{-} "13"; "14" };
{ \ar@{-} "15"; "16" };
{ \ar@{-} "13"; "15" };
{ \ar@{-} "14"; "16" };
\end{xy}
\quad
\begin{xy}
(  0,  0 ) = " 1";
(  0, 16 ) = " 2";
(  8,  0 ) = " 3";
(  8, 16 ) = " 4";
{ \ar@{-} " 1"; " 2" };
{ \ar@{-} " 3"; " 4" };
{ \ar@{-} " 1"; " 3" };
{ \ar@{-} " 2"; " 4" };
(  8,  0 ) = " 5";
(  8,  8 ) = " 6";
( 16,  0 ) = " 7";
( 16,  8 ) = " 8";
{ \ar@{-} " 5"; " 6" };
{ \ar@{-} " 7"; " 8" };
{ \ar@{-} " 5"; " 7" };
{ \ar@{-} " 6"; " 8" };
(  8,  8 ) = " 9";
(  8, 12 ) = "10";
( 16,  8 ) = "11";
( 16, 12 ) = "12";
{ \ar@{-} " 9"; "10" };
{ \ar@{-} "11"; "12" };
{ \ar@{-} " 9"; "11" };
{ \ar@{-} "10"; "12" };
(  8, 12 ) = "13";
(  8, 16 ) = "14";
( 16, 12 ) = "15";
( 16, 16 ) = "16";
{ \ar@{-} "13"; "14" };
{ \ar@{-} "15"; "16" };
{ \ar@{-} "13"; "15" };
{ \ar@{-} "14"; "16" };
\end{xy}
\quad
\begin{xy}
(  0,  4 ) = " 1";
(  0,  8 ) = " 2";
(  8,  4 ) = " 3";
(  8,  8 ) = " 4";
{ \ar@{-} " 1"; " 2" };
{ \ar@{-} " 3"; " 4" };
{ \ar@{-} " 1"; " 3" };
{ \ar@{-} " 2"; " 4" };
(  0,  0 ) = " 5";
(  0,  4 ) = " 6";
( 16,  0 ) = " 7";
( 16,  4 ) = " 8";
{ \ar@{-} " 5"; " 6" };
{ \ar@{-} " 7"; " 8" };
{ \ar@{-} " 5"; " 7" };
{ \ar@{-} " 6"; " 8" };
(  0,  8 ) = " 9";
(  0, 16 ) = "10";
( 16,  8 ) = "11";
( 16, 16 ) = "12";
{ \ar@{-} " 9"; "10" };
{ \ar@{-} "11"; "12" };
{ \ar@{-} " 9"; "11" };
{ \ar@{-} "10"; "12" };
(  8,  4 ) = "13";
(  8,  8 ) = "14";
( 16,  4 ) = "15";
( 16,  8 ) = "16";
{ \ar@{-} "13"; "14" };
{ \ar@{-} "15"; "16" };
{ \ar@{-} "13"; "15" };
{ \ar@{-} "14"; "16" };
\end{xy}
\quad
\begin{xy}
(  0,  8 ) = " 1";
(  0, 12 ) = " 2";
(  8,  8 ) = " 3";
(  8, 12 ) = " 4";
{ \ar@{-} " 1"; " 2" };
{ \ar@{-} " 3"; " 4" };
{ \ar@{-} " 1"; " 3" };
{ \ar@{-} " 2"; " 4" };
(  0, 12 ) = " 5";
(  0, 16 ) = " 6";
(  8, 12 ) = " 7";
(  8, 16 ) = " 8";
{ \ar@{-} " 5"; " 6" };
{ \ar@{-} " 7"; " 8" };
{ \ar@{-} " 5"; " 7" };
{ \ar@{-} " 6"; " 8" };
(  0,  0 ) = " 9";
(  0,  8 ) = "10";
( 16,  0 ) = "11";
( 16,  8 ) = "12";
{ \ar@{-} " 9"; "10" };
{ \ar@{-} "11"; "12" };
{ \ar@{-} " 9"; "11" };
{ \ar@{-} "10"; "12" };
(  8,  8 ) = "13";
(  8, 16 ) = "14";
( 16,  8 ) = "15";
( 16, 16 ) = "16";
{ \ar@{-} "13"; "14" };
{ \ar@{-} "15"; "16" };
{ \ar@{-} "13"; "15" };
{ \ar@{-} "14"; "16" };
\end{xy}
\quad
\begin{xy}
(  0,  8 ) = " 1";
(  0, 12 ) = " 2";
(  8,  8 ) = " 3";
(  8, 12 ) = " 4";
{ \ar@{-} " 1"; " 2" };
{ \ar@{-} " 3"; " 4" };
{ \ar@{-} " 1"; " 3" };
{ \ar@{-} " 2"; " 4" };
(  0,  0 ) = " 5";
(  0,  8 ) = " 6";
( 16,  0 ) = " 7";
( 16,  8 ) = " 8";
{ \ar@{-} " 5"; " 6" };
{ \ar@{-} " 7"; " 8" };
{ \ar@{-} " 5"; " 7" };
{ \ar@{-} " 6"; " 8" };
(  0, 12 ) = " 9";
(  0, 16 ) = "10";
( 16, 12 ) = "11";
( 16, 16 ) = "12";
{ \ar@{-} " 9"; "10" };
{ \ar@{-} "11"; "12" };
{ \ar@{-} " 9"; "11" };
{ \ar@{-} "10"; "12" };
(  8,  8 ) = "13";
(  8, 12 ) = "14";
( 16,  8 ) = "15";
( 16, 12 ) = "16";
{ \ar@{-} "13"; "14" };
{ \ar@{-} "15"; "16" };
{ \ar@{-} "13"; "15" };
{ \ar@{-} "14"; "16" };
\end{xy}
\quad
\begin{xy}
(  0,  8 ) = " 1";
(  0, 16 ) = " 2";
(  8,  8 ) = " 3";
(  8, 16 ) = " 4";
{ \ar@{-} " 1"; " 2" };
{ \ar@{-} " 3"; " 4" };
{ \ar@{-} " 1"; " 3" };
{ \ar@{-} " 2"; " 4" };
(  0,  0 ) = " 5";
(  0,  4 ) = " 6";
( 16,  0 ) = " 7";
( 16,  4 ) = " 8";
{ \ar@{-} " 5"; " 6" };
{ \ar@{-} " 7"; " 8" };
{ \ar@{-} " 5"; " 7" };
{ \ar@{-} " 6"; " 8" };
(  0,  4 ) = " 9";
(  0,  8 ) = "10";
( 16,  4 ) = "11";
( 16,  8 ) = "12";
{ \ar@{-} " 9"; "10" };
{ \ar@{-} "11"; "12" };
{ \ar@{-} " 9"; "11" };
{ \ar@{-} "10"; "12" };
(  8,  8 ) = "13";
(  8, 16 ) = "14";
( 16,  8 ) = "15";
( 16, 16 ) = "16";
{ \ar@{-} "13"; "14" };
{ \ar@{-} "15"; "16" };
{ \ar@{-} "13"; "15" };
{ \ar@{-} "14"; "16" };
\end{xy}
\quad
\begin{xy}
(  0,  8 ) = " 1";
(  0, 16 ) = " 2";
(  8,  8 ) = " 3";
(  8, 16 ) = " 4";
{ \ar@{-} " 1"; " 2" };
{ \ar@{-} " 3"; " 4" };
{ \ar@{-} " 1"; " 3" };
{ \ar@{-} " 2"; " 4" };
(  0,  0 ) = " 5";
(  0,  8 ) = " 6";
( 16,  0 ) = " 7";
( 16,  8 ) = " 8";
{ \ar@{-} " 5"; " 6" };
{ \ar@{-} " 7"; " 8" };
{ \ar@{-} " 5"; " 7" };
{ \ar@{-} " 6"; " 8" };
(  8,  8 ) = " 9";
(  8, 16 ) = "10";
( 12,  8 ) = "11";
( 12, 16 ) = "12";
{ \ar@{-} " 9"; "10" };
{ \ar@{-} "11"; "12" };
{ \ar@{-} " 9"; "11" };
{ \ar@{-} "10"; "12" };
( 12,  8 ) = "13";
( 12, 16 ) = "14";
( 16,  8 ) = "15";
( 16, 16 ) = "16";
{ \ar@{-} "13"; "14" };
{ \ar@{-} "15"; "16" };
{ \ar@{-} "13"; "15" };
{ \ar@{-} "14"; "16" };
\end{xy}
\\ [1mm] 
& \begin{xy}
(  0,  8 ) = " 1";
(  0, 16 ) = " 2";
(  8,  8 ) = " 3";
(  8, 16 ) = " 4";
{ \ar@{-} " 1"; " 2" };
{ \ar@{-} " 3"; " 4" };
{ \ar@{-} " 1"; " 3" };
{ \ar@{-} " 2"; " 4" };
(  0,  0 ) = " 5";
(  0,  8 ) = " 6";
( 16,  0 ) = " 7";
( 16,  8 ) = " 8";
{ \ar@{-} " 5"; " 6" };
{ \ar@{-} " 7"; " 8" };
{ \ar@{-} " 5"; " 7" };
{ \ar@{-} " 6"; " 8" };
(  8,  8 ) = " 9";
(  8, 12 ) = "10";
( 16,  8 ) = "11";
( 16, 12 ) = "12";
{ \ar@{-} " 9"; "10" };
{ \ar@{-} "11"; "12" };
{ \ar@{-} " 9"; "11" };
{ \ar@{-} "10"; "12" };
(  8, 12 ) = "13";
(  8, 16 ) = "14";
( 16, 12 ) = "15";
( 16, 16 ) = "16";
{ \ar@{-} "13"; "14" };
{ \ar@{-} "15"; "16" };
{ \ar@{-} "13"; "15" };
{ \ar@{-} "14"; "16" };
\end{xy}
\quad
\begin{xy}
(  0, 12 ) = " 1";
(  0, 16 ) = " 2";
(  8, 12 ) = " 3";
(  8, 16 ) = " 4";
{ \ar@{-} " 1"; " 2" };
{ \ar@{-} " 3"; " 4" };
{ \ar@{-} " 1"; " 3" };
{ \ar@{-} " 2"; " 4" };
(  0,  0 ) = " 5";
(  0,  8 ) = " 6";
( 16,  0 ) = " 7";
( 16,  8 ) = " 8";
{ \ar@{-} " 5"; " 6" };
{ \ar@{-} " 7"; " 8" };
{ \ar@{-} " 5"; " 7" };
{ \ar@{-} " 6"; " 8" };
(  0,  8 ) = " 9";
(  0, 12 ) = "10";
( 16,  8 ) = "11";
( 16, 12 ) = "12";
{ \ar@{-} " 9"; "10" };
{ \ar@{-} "11"; "12" };
{ \ar@{-} " 9"; "11" };
{ \ar@{-} "10"; "12" };
(  8, 12 ) = "13";
(  8, 16 ) = "14";
( 16, 12 ) = "15";
( 16, 16 ) = "16";
{ \ar@{-} "13"; "14" };
{ \ar@{-} "15"; "16" };
{ \ar@{-} "13"; "15" };
{ \ar@{-} "14"; "16" };
\end{xy}
\quad
\begin{xy}
(  0,  0 ) = " 1";
(  0,  2 ) = " 2";
( 16,  0 ) = " 3";
( 16,  2 ) = " 4";
{ \ar@{-} " 1"; " 2" };
{ \ar@{-} " 3"; " 4" };
{ \ar@{-} " 1"; " 3" };
{ \ar@{-} " 2"; " 4" };
(  0,  2 ) = " 5";
(  0,  4 ) = " 6";
( 16,  2 ) = " 7";
( 16,  4 ) = " 8";
{ \ar@{-} " 5"; " 6" };
{ \ar@{-} " 7"; " 8" };
{ \ar@{-} " 5"; " 7" };
{ \ar@{-} " 6"; " 8" };
(  0,  4 ) = " 9";
(  0,  8 ) = "10";
( 16,  4 ) = "11";
( 16,  8 ) = "12";
{ \ar@{-} " 9"; "10" };
{ \ar@{-} "11"; "12" };
{ \ar@{-} " 9"; "11" };
{ \ar@{-} "10"; "12" };
(  0,  8 ) = "13";
(  0, 16 ) = "14";
( 16,  8 ) = "15";
( 16, 16 ) = "16";
{ \ar@{-} "13"; "14" };
{ \ar@{-} "15"; "16" };
{ \ar@{-} "13"; "15" };
{ \ar@{-} "14"; "16" };
\end{xy}
\quad
\begin{xy}
(  0,  0 ) = " 1";
(  0,  4 ) = " 2";
( 16,  0 ) = " 3";
( 16,  4 ) = " 4";
{ \ar@{-} " 1"; " 2" };
{ \ar@{-} " 3"; " 4" };
{ \ar@{-} " 1"; " 3" };
{ \ar@{-} " 2"; " 4" };
(  0,  4 ) = " 5";
(  0,  6 ) = " 6";
( 16,  4 ) = " 7";
( 16,  6 ) = " 8";
{ \ar@{-} " 5"; " 6" };
{ \ar@{-} " 7"; " 8" };
{ \ar@{-} " 5"; " 7" };
{ \ar@{-} " 6"; " 8" };
(  0,  6 ) = " 9";
(  0,  8 ) = "10";
( 16,  6 ) = "11";
( 16,  8 ) = "12";
{ \ar@{-} " 9"; "10" };
{ \ar@{-} "11"; "12" };
{ \ar@{-} " 9"; "11" };
{ \ar@{-} "10"; "12" };
(  0,  8 ) = "13";
(  0, 16 ) = "14";
( 16,  8 ) = "15";
( 16, 16 ) = "16";
{ \ar@{-} "13"; "14" };
{ \ar@{-} "15"; "16" };
{ \ar@{-} "13"; "15" };
{ \ar@{-} "14"; "16" };
\end{xy}
\quad
\begin{xy}
(  0,  0 ) = " 1";
(  0,  4 ) = " 2";
( 16,  0 ) = " 3";
( 16,  4 ) = " 4";
{ \ar@{-} " 1"; " 2" };
{ \ar@{-} " 3"; " 4" };
{ \ar@{-} " 1"; " 3" };
{ \ar@{-} " 2"; " 4" };
(  0,  4 ) = " 5";
(  0,  8 ) = " 6";
( 16,  4 ) = " 7";
( 16,  8 ) = " 8";
{ \ar@{-} " 5"; " 6" };
{ \ar@{-} " 7"; " 8" };
{ \ar@{-} " 5"; " 7" };
{ \ar@{-} " 6"; " 8" };
(  0,  8 ) = " 9";
(  0, 12 ) = "10";
( 16,  8 ) = "11";
( 16, 12 ) = "12";
{ \ar@{-} " 9"; "10" };
{ \ar@{-} "11"; "12" };
{ \ar@{-} " 9"; "11" };
{ \ar@{-} "10"; "12" };
(  0, 12 ) = "13";
(  0, 16 ) = "14";
( 16, 12 ) = "15";
( 16, 16 ) = "16";
{ \ar@{-} "13"; "14" };
{ \ar@{-} "15"; "16" };
{ \ar@{-} "13"; "15" };
{ \ar@{-} "14"; "16" };
\end{xy}
\quad
\begin{xy}
(  0,  0 ) = " 1";
(  0,  8 ) = " 2";
( 16,  0 ) = " 3";
( 16,  8 ) = " 4";
{ \ar@{-} " 1"; " 2" };
{ \ar@{-} " 3"; " 4" };
{ \ar@{-} " 1"; " 3" };
{ \ar@{-} " 2"; " 4" };
(  0,  8 ) = " 5";
(  0, 10 ) = " 6";
( 16,  8 ) = " 7";
( 16, 10 ) = " 8";
{ \ar@{-} " 5"; " 6" };
{ \ar@{-} " 7"; " 8" };
{ \ar@{-} " 5"; " 7" };
{ \ar@{-} " 6"; " 8" };
(  0, 10 ) = " 9";
(  0, 12 ) = "10";
( 16, 10 ) = "11";
( 16, 12 ) = "12";
{ \ar@{-} " 9"; "10" };
{ \ar@{-} "11"; "12" };
{ \ar@{-} " 9"; "11" };
{ \ar@{-} "10"; "12" };
(  0, 12 ) = "13";
(  0, 16 ) = "14";
( 16, 12 ) = "15";
( 16, 16 ) = "16";
{ \ar@{-} "13"; "14" };
{ \ar@{-} "15"; "16" };
{ \ar@{-} "13"; "15" };
{ \ar@{-} "14"; "16" };
\end{xy}
\quad
\begin{xy}
(  0,  0 ) = " 1";
(  0,  8 ) = " 2";
( 16,  0 ) = " 3";
( 16,  8 ) = " 4";
{ \ar@{-} " 1"; " 2" };
{ \ar@{-} " 3"; " 4" };
{ \ar@{-} " 1"; " 3" };
{ \ar@{-} " 2"; " 4" };
(  0,  8 ) = " 5";
(  0, 12 ) = " 6";
( 16,  8 ) = " 7";
( 16, 12 ) = " 8";
{ \ar@{-} " 5"; " 6" };
{ \ar@{-} " 7"; " 8" };
{ \ar@{-} " 5"; " 7" };
{ \ar@{-} " 6"; " 8" };
(  0, 12 ) = " 9";
(  0, 14 ) = "10";
( 16, 12 ) = "11";
( 16, 14 ) = "12";
{ \ar@{-} " 9"; "10" };
{ \ar@{-} "11"; "12" };
{ \ar@{-} " 9"; "11" };
{ \ar@{-} "10"; "12" };
(  0, 14 ) = "13";
(  0, 16 ) = "14";
( 16, 14 ) = "15";
( 16, 16 ) = "16";
{ \ar@{-} "13"; "14" };
{ \ar@{-} "15"; "16" };
{ \ar@{-} "13"; "15" };
{ \ar@{-} "14"; "16" };
\end{xy}
\end{array}
$}
\vspace{-2mm}
\caption{Decompositions of the unit square ($d=2)$ using bisections ($p=2$) into $n \le 4$ subrectangles}
\label{degrees1234}
\end{center}
\end{figure}

Observe that in the case of $d = 1$, $p = 2$ and $n \ge 1$, 
Definition~\ref{defndecomp} reduces to subdividing the unit interval into $n$ subintervals 
using bisections. 
Thus, $C_{1,2}(n) = C(n)$ gives the ordinary Catalan numbers. 
More generally, $C_{1,p}(n)$ gives the $p$-ary Catalan numbers \cite{Aval,GKP}:
\[
C_{1,p}\left( n \right) = \frac{1}{n} \binom{ \frac{p(n-1)}{p-1}}{ \frac{n-1}{p-1}}
\quad\quad\quad
( n \ge 1 ).
\]
Thus $C_{d,p}(n)$ is a two-parameter generalization of the Catalan numbers. 

Another generalization of $C_{1,p}(n)$ are the Fuss--Catalan numbers (also known as Raney numbers): Given integers $r \geq 1, p \geq 2$, and $m \geq 0$, the quantity
\[
R_{r,p}(m) = \frac{r}{mp+r} \binom{mp+r}{m}
\]
counts (among other things) the number of plane rooted trees whose root node has degree $r$, with $m$ non-root internal nodes that each has exactly $p$ children~\cite{BeagleyD15}. Notice that when $r=1$, $R_{1,p}(m)$ gives the number of trees whose root node is attached to a single full $p$-ary tree with $n = 1+ m(p-1)$ leaves, and so we see that $R_{1,p}(m) = C_{1,p}(1+m(p-1))$ for every $m \geq 0$ and $p \geq 2$. For several other higher-dimensional generalizations of the Catalan numbers, see \cite{Brin, FreyS, Gould, Kahkeshani, Koshy}.

\subsection{Interchange laws for operations of higher arity}

We provide another interpretation of $C_{d,p}(n)$ using $d$ distinct $p$-ary operations 
(denoted by $d$ operation symbols or by $d$ types of parentheses), 
generalizing interpretation (i) of the ordinary Catalan numbers.

\begin{definition}
Fix integers $d \ge 1$ (\emph{dimension}) and $p \ge 2$ (\emph{arity}).
Let $S$ be a set and fix operations $f_1, \dots, f_d \colon S^p \to S$. 
Let $A = ( a_{ij} )$ be a $p \times p$ array of elements of $S$.
If $f_k, f_{\ell}$ are two of the operations, 
then we may either apply $f_k$ to each row of $A$ and then apply $f_\ell$ to the results,
or apply $f_\ell$ to each column of $A$ and then apply $f_k$ to the results.
If for every array $A$ both ways produce the same element of $S$,
\begin{equation}
\label{intlaw}
\begin{array}{rl}
&f_\ell \big( \,
f_k( a_{11}, \dots, a_{1p} ), \,
f_k( a_{21}, \dots, a_{2p} ), \,
\dots, \,
f_k( a_{p1}, \dots, a_{pp} ) \,
\big)
=
\\[2pt]
&
f_k \big( \,
f_\ell( a_{11}, \dots, a_{p1} ), \,
f_\ell( a_{12}, \dots, a_{p2} ), \,
\dots, \,
f_\ell( a_{1p}, \dots, a_{pp} ) \,
\big),
\end{array}
\end{equation}
then we say that $f_k$ and $f_\ell$ satisfy the \emph{interchange law}. 
If equation \eqref{intlaw} holds for every pair of distinct operations 
then we have an \emph{interchange system} of arity $p$ and dimension $d$.
\end{definition}

In universal algebra (resp.,~algebraic operads), 
interchange systems were introduced in the early 1960s by Evans \cite{E} 
(resp.,~in the early 1970s by Boardman and Vogt \cite{BV}).
In the case of $d = p = 2$, two binary operations satisfying the interchange law 
first appeared in the late 1950s in Godement's \textit{five rules of functorial calculus} \cite[Appendix 1, (V)]{G}.

If we denote the two binary operations by $( - - )$ and $\{ - - \}$,  then
the interchange law can be restated as
\[
\{ \, ( a_{11} \, a_{12} ) \, ( a_{21} \, a_{22} ) \, \}
=
( \, \{ a_{11} \, a_{21} \} \, \{ a_{12} \, a_{22} \} \, ).
\]
Observe that the operations trade places and the arguments $a_{12}$, $a_{21}$ transpose. 
The interchange laws correspond naturally to the hypercube decompositions 
defined in Section~\ref{subsec12}: 
If we regard $( - - )$ and $\{ - - \}$ respectively as vertical and horizontal bisections 
of rectangles in $\mathbb{R}^2$, then the interchange law expresses the equivalence 
of the two ways of partitioning a square into four equal subsquares:
\begin{center}
\scalebox{1.0}{
$
\{ \, ( a_{11} \, a_{12} ) \, ( a_{21} \, a_{22} ) \, \}
\; = 
\begin{array}{c}
\begin{tikzpicture}[draw=black, x=8 mm, y=8 mm]
\node [Square] at ($(0, 0.0)$) {$a_{11}$};
\node [Square] at ($(1, 0.0)$) {$a_{12}$};
\node [Square] at ($(0,-1.2)$) {$a_{21}$};
\node [Square] at ($(1,-1.2)$) {$a_{22}$};
\end{tikzpicture}
\end{array}
=
\begin{array}{c}
\begin{tikzpicture}[draw=black, x=8 mm, y=8 mm]
\node [Square] at ($(0, 0)$) {$a_{11}$};
\node [Square] at ($(1, 0)$) {$a_{12}$};
\node [Square] at ($(0,-1)$) {$a_{21}$};
\node [Square] at ($(1,-1)$) {$a_{22}$};
\end{tikzpicture}
\end{array}
=
\begin{array}{c}
\begin{tikzpicture}[draw=black, x=8 mm, y=8 mm]
\node [Square] at ($(0.0, 0)$) {$a_{11}$};
\node [Square] at ($(1.2, 0)$) {$a_{12}$};
\node [Square] at ($(0.0,-1)$) {$a_{21}$};
\node [Square] at ($(1.2,-1)$) {$a_{22}$};
\end{tikzpicture}
\end{array}
= \;
( \, \{ a_{11} \, a_{21} \} \, \{ a_{12} \, a_{22} \} \, ).
$
}
\end{center}
\noindent
For further information on algebraic operads and higher categories, 
see \cite{BaezDolan,BB,BW,BM,Cheng,Kock,Leinster,LV,May,Street}.

Using the Boardman--Vogt tensor product of operads, the third author and Dotsenko \cite{BD} showed 
that this correspondence between interchange laws and hypercube partitions extends to 
arbitrary arity $p$ and dimension $d$, in which case  $f_i$ ($i = 1, \ldots, d$) 
corresponds to the $H_i$ operation of Definition~\ref{defndecomp} 
(the dissection of a $d$-dimensional subrectangle into $p$ equal parts by hyperplanes 
orthogonal to the $i^{\tn{th}}$ coordinate axis). 
They also proved the following result.

\begin{theorem}[\cite{BD}, \S3.1]\label{BDTheorem}
Define the generating function
\[
y = y_{d,p}(x) = \sum_{n \geq 1} C_{d,p}(n) x^n.
\]
Then $y$ satisfies this polynomial functional equation:
\begin{equation}
\label{dgfeq}
\sum_{k=0}^d (-1)^k \binom{d}{k} y^{p^k} = x.
\end{equation}
\end{theorem}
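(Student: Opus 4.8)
The plan is to derive \eqref{dgfeq} by a direct inclusion--exclusion over the ``grid directions'' of a decomposition, with no homological algebra. Call a direction $i \in \{1,\dots,d\}$ \emph{grid-active} for a $(d,p,n)$-decomposition $D$ if all $p-1$ hyperplanes $x_i = j/p$ ($1 \le j \le p-1$) are full cuts of $D$, meaning no block of $D$ straddles them; let $\operatorname{Act}(D)$ denote the set of grid-active directions, and for a decomposition with $\nu$ blocks assign the weight $x^{\nu}$. The first step of the construction in Definition~\ref{defndecomp} replaces $(0,1)^d$ by $H_{i_1}((0,1)^d)$, and every later step only refines the current partition, so those $p-1$ hyperplanes stay full cuts; hence $\operatorname{Act}(D) \ne \emptyset$ as soon as $n > 1$, while $\operatorname{Act}(\{(0,1)^d\}) = \emptyset$. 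Therefore the weighted count of decompositions with $\operatorname{Act}(D) = \emptyset$ is exactly $x$. Writing $[\,\operatorname{Act}(D)=\emptyset\,] = \sum_{T \subseteq \operatorname{Act}(D)} (-1)^{|T|}$ and exchanging the order of summation gives $x = \sum_{T \subseteq \{1,\dots,d\}} (-1)^{|T|} \cdot \big(\text{weighted count of } D \text{ with } \operatorname{Act}(D) \supseteq T\big)$.

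Next I would show that the weighted count of decompositions $D$ with $\operatorname{Act}(D) \supseteq T$ equals $y^{p^{|T|}}$, for every $T \subseteq \{1,\dots,d\}$ (the case $T = \emptyset$ just recovering $y$). Given such a $D$, cutting along all the hyperplanes $x_i = j/p$ with $i \in T$ and $1 \le j \le p-1$ produces a grid of $p^{|T|}$ congruent cells into which $D$ restricts cell by cell; conversely, an arbitrary family of decompositions indexed by the cells reassembles into a decomposition $D$ with $\operatorname{Act}(D) \supseteq T$, because the $T$-grid is itself realizable as a sequence of $H_i$-moves --- cut $(0,1)^d$ in the first direction of $T$, cut each resulting slab in the second direction of $T$, and so on --- followed by the move sequences for the individual cells. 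These two maps are mutually inverse, and the number of blocks is additive over the cells, so the cell-wise generating functions multiply to $y^{p^{|T|}}$. Grouping the outer sum by $|T| = k$ turns the displayed identity into $x = \sum_{k=0}^{d} (-1)^k \binom{d}{k} y^{p^k}$, which is \eqref{dgfeq}.

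The one genuinely geometric input, and the main obstacle, is the lemma that the restriction of a valid decomposition to a grid cell is again valid, i.e.\ realizable by the recursive process. I would establish it from the equivalent recursive characterization ``$D$ is valid if and only if $D = \{(0,1)^d\}$, or there is a direction $i$ for which all the hyperplanes $x_i = j/p$ are full cuts of $D$ and the restriction of $D$ (rescaled) to each of the $p$ direction-$i$ slabs is valid'' --- this follows from Definition~\ref{defndecomp} by grouping the construction steps according to which first-level slab they act inside. Given a valid $D$ and a grid-active direction $i$, one then shows by induction on the number of blocks that the restrictions of $D$ to its direction-$i$ slabs are valid, distinguishing whether $i$ is the direction furnished by the characterization; iterating over the directions of $T$ covers an arbitrary grid cell. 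Equivalently, all of this bookkeeping can be packaged as a bijection between $(d,p,\cdot)$-decompositions and a family of full $p$-ary trees whose internal nodes are colored by directions --- a single leaf for the trivial decomposition, and otherwise a root colored by the least grid-active direction sitting above the $p$ subtrees coming from the direction-$i$ slabs, subject to a forbidden-pattern condition that encodes ``least'' --- which is the combinatorial model used throughout the rest of the paper; translating the recursion defining that tree family into a generating-function equation and applying the same inclusion--exclusion yields \eqref{dgfeq} as well.
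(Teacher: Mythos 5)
Your proof is correct, and at bottom it performs the same inclusion--exclusion as the paper's Section~\ref{sec4}, but executed directly on decompositions rather than on labelled trees. Your set $\set{D : T \subseteq \mathrm{Act}(D)}$ is exactly the image under the paper's map $f$ of the set $\U_{d,p,n}^T$ (a decomposition admits a construction whose first cut is along direction $i$ precisely when $i$ is grid-active in your sense), and your claim that its weighted count is $y^{p^{|T|}}$ is the content of Lemma~\ref{lemucount}; your identity $[\,\mathrm{Act}(D)=\emptyset\,]=\sum_{T\subseteq\mathrm{Act}(D)}(-1)^{|T|}$ replaces the paper's two-stage computation via equation \eqref{eqtu} together with the intersection analysis of the sets $\U^{S}_{d,p,n}$. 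What your route buys is economy: you never need interchange equivalence of trees, the uniqueness of interchange-maximal representatives (Theorem~\ref{uniqueintmax}), or the auxiliary bijection $\U^S \to \V^S$; the whole argument lives on the decomposition side, and grouping by $|T|=k$ produces the signs of \eqref{dgfeq} in one step. What it costs is that you must prove directly the geometric lemma you correctly single out --- that the restriction of a valid decomposition to a cell of the $T$-grid, suitably rescaled, is again valid --- which the paper absorbs into the recursive definition of $g$ and Theorem~\ref{uniqueintmax}. Your plan for that lemma (the recursive characterization of validity via a full-cut direction, then induction on the number of blocks to show that \emph{every} grid-active direction, not only the one furnished by the characterization, splits $D$ into valid slabs) is sound; the two details worth writing out are that each slab has strictly fewer blocks than $D$, so the induction terminates, and that after restricting to a direction-$i_1$ slab the remaining directions of $T$ stay grid-active, which holds because the rescaling acts only on the $i_1$-coordinate.
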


For $d = 1$ and $p = 2$, equation \eqref{dgfeq} reduces to equation \eqref{CatalanFnEq}, 
the functional equation for the ordinary Catalan numbers. 

One may also regard $C_{d, p}(n)$ as the number of association types \cite{Eth}
(or placements of parentheses and operation symbols) of degree $n$ in higher-dimensional algebra
\cite{Brown,BHS,BP} with $d$ operations of arity $p$.
Similar (but inequivalent) constructions in the literature include guillotine partitions, VLSI floorplans, and planar rectangulations \cite{ABP,ABPR,ABBMP,ABMP,AM,CM,BDHe,LR,R}.

\subsection{Outline of this paper}

In Section~\ref{sec2}, we use Lagrange inversion to prove a simple closed formula 
(a finite sum) for $C_{d,p}(n)$ and then consider two special cases.
In Section~\ref{sec3}, we use analytic methods to determine the asymptotic behavior 
of $C_{d,p}(n)$.
In Section~\ref{sec4}, we provide an alternative proof to Theorem~\ref{BDTheorem} that is purely combinatorial and does not involve homological algebra. We shall also show that $C_{d,p}(n)$ counts a restricted set of
$p$-ary trees by establishing a bijection between these trees and hypercube decompositions, hence generalizing interpretation (ii) of the ordinary Catalan numbers. In Section~\ref{sec5}, we indicate some directions for further research, and explain how our results may be understood from the point of view of algebraic operads.

\section{Enumeration formulas}\label{sec2}

We first derive a summation formula for $C_{d,p}(n)$ and
then discuss special cases for small values of $d$ and $p$.
We use the functional equation~\eqref{dgfeq} to obtain our closed formula.

\begin{proposition}
\label{binarytheorem}
For all integers $d \geq 1$, $p \geq 2$, and $n \geq 1$ we have
\[
C_{d,p}(n)
=
\frac{1}{n}
\sum_{t_1, \ldots, t_d}
\left[
\binom{ n {-} 1 {+} t_1 {+} \cdots {+} t_d }{ \, n{-}1, \, t_1, \, t_2, \, \ldots, \, t_d \, }
\prod_{k=1}^d \left((-1)^{t_k(k+1)} \binom{d}{k}^{t_k}  \right)
\right],
\]
where the sum is over all integers $t_1, \ldots, t_d \ge 0$ such that
\[
\sum_{k=1}^d t_k ( p^k{-}1 ) = n{-}1.
\]
\end{proposition}

\begin{proof}
We may rearrange the functional equation \eqref{dgfeq} to obtain $y = x \phi(y)$ where
\[
\phi(y) = \left( \sum_{k=0}^d (-1)^k \binom{d}{k} y^{p^k-1} \right)^{\!\!-1}.
\]
Since $\phi(y)$ can be expanded as a formal power series in $y$ with nonzero constant term, 
we apply Lagrange inversion \cite{Gessel} to obtain
\[
[x^n] y
=
\frac{1}{n} [y^{n-1}] \big( \phi(y)^n \big)
=
\frac{1}{n} [y^{n-1}] \left( \sum_{k=0}^d (-1)^k \binom{d}{k} y^{p^k -1} \right)^{\!\!-n}
\!\!\!\!\!,
\]
where $[x^n] y$ denotes the coefficient of $x^n$ in the power series $y$.
If we expand the factor with the negative exponent and simplify the result then we obtain
\begin{align*}
\phi(y)^n
&=
\sum_{j \ge 0}
\binom{n{-}1{+}j}{j}
\left( \sum_{k=1}^d (-1)^{k+1} \binom{d}{k} y^{p^k -1} \right)^{j}
\\
&=
\sum_{j \geq 0}
\binom{n{-}1{+}j}{j}
\sum_{\substack{ t_1, \ldots, t_d \geq 0 \\ t_1+\cdots+t_d=j}}
\left[
\binom{j}{t_1, t_2, \ldots, t_d}
\prod_{k=1}^d
\left( (-1)^{t_k(k+1)} \binom{d}{k}^{t_k}  y^{t_{k} (p^k -1)} \right)
\right]
\\
&=
\sum_{ t_1, \ldots, t_d \geq 0 }
\left[
\binom{n{-}1+ \sum_{i=1}^d t_i }{n{-}1, t_1, t_2, \ldots, t_d}
\prod_{k=1}^d
\left( (-1)^{t_k(k+1)} \binom{d}{k}^{t_k}  y^{t_{k} (p^k -1)} \right)
\right].
\end{align*}

In the last step we used the equation $\sum_{k=1}^d t_k = j$ to eliminate $j$, 
and then used the obvious combinatorial identity
\[
\binom{n{-}1{+}j}{j} \binom{j}{t_1,\ldots,t_d} = \binom{n{-}1{+}j}{n{-}1,t_1,\ldots,t_d}.
\]
The term of interest $y^{n-1}$ occurs if and only if
\[
\sum_{k=1}^d t_k (p^k{-}1) = n{-}1,
\]
which completes the proof.
\end{proof}

Proposition \ref{binarytheorem} implies simple closed formulas for $C_{d,p}(n)$ 
for small $d$ and $p$. For instance, in the case of $d=p=2$, we obtain the following.

\begin{corollary}
\label{cor22}
The number of dyadic partitions of the unit square into $n$ rectangles is
\[
C_{2,2}(n)
=
\frac{1}{n}
\sum_{i=0}^{ \lfloor \frac{n-1}{3} \rfloor }
\binom{ 2(n{-}1{-}i) }{ \, n{-}1, \, n{-}1{-}3i, \, i \, } (-1)^{i} \, 2^{n-1-3i}.
\]
\end{corollary}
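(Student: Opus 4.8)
The plan is to specialize Theorem~\ref{binarytheorem} to the case $d = 2$, $p = 2$ and then simplify. First, observe that by Definition~\ref{defndecomp} the quantity $C_{2,2}(n)$ is precisely the number of $(2,2,n)$-decompositions, that is, the number of dyadic partitions of the unit square into $n$ rectangles obtained by bisections; so the quantity named in the corollary is exactly the one evaluated by Theorem~\ref{binarytheorem} with $d = p = 2$.

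With $d = 2$, the sum in Theorem~\ref{binarytheorem} runs over pairs of nonnegative integers $(t_1, t_2)$ satisfying $t_1(2^1 - 1) + t_2(2^2 - 1) = t_1 + 3 t_2 = n - 1$. Since $\binom{2}{1} = 2$ and $\binom{2}{2} = 1$, the product over $k$ reduces to $(-1)^{2 t_1}\binom{2}{1}^{t_1} \cdot (-1)^{3 t_2}\binom{2}{2}^{t_2} = 2^{t_1}(-1)^{t_2}$, using $(-1)^{2 t_1} = 1$ and $(-1)^{3 t_2} = (-1)^{t_2}$. The accompanying multinomial coefficient is $\binom{n - 1 + t_1 + t_2}{n - 1,\, t_1,\, t_2}$. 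Next I would reindex the sum by $i = t_2$. The constraint then forces $t_1 = n - 1 - 3 i$, so $t_1 + t_2 = n - 1 - 2 i$ and the top entry of the multinomial coefficient becomes $n - 1 + t_1 + t_2 = 2(n - 1 - i)$; the summand turns into $\binom{2(n - 1 - i)}{n - 1,\, n - 1 - 3 i,\, i}(-1)^{i}\, 2^{n - 1 - 3 i}$. Finally, the condition $t_1 \ge 0$ is equivalent to $i \le (n - 1)/3$, while $i \ge 0$ is automatic, so $i$ ranges over $0 \le i \le \lfloor (n - 1)/3 \rfloor$; dividing by $n$ gives the stated formula.

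There is essentially no obstacle here: the corollary is a direct substitution into Theorem~\ref{binarytheorem}. The only points requiring (minor) care are the bookkeeping of the sign $(-1)^{t_k(k+1)}$ for $k = 1, 2$, and checking that the substitution $i = t_2$ transforms both the multinomial coefficient and the summation bounds correctly.
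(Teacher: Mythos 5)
Your proposal is correct and follows essentially the same route as the paper: substitute $d=p=2$ into Theorem~\ref{binarytheorem}, reduce the constraint to $t_1+3t_2=n-1$, set $i=t_2$ so that $t_1=n-1-3i$ and the top entry becomes $2(n-1-i)$, and read off the summation bounds from $t_1\ge 0$. The explicit check of the sign factor $(-1)^{t_k(k+1)}$ is a welcome bit of extra care but changes nothing.
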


\begin{proof}
For $d=p=2$, Proposition~\ref{binarytheorem} gives
\begin{equation}
\label{d2p2}
C_{2,2}(n) = \frac{1}{n}
\sum_{ \substack{ t_1, \, t_2 \geq 0 \\[1pt] t_1 + 3t_2 = n-1 } }
\binom{ n{-}1{+}t_1{+}t_2 }{ \, n{-}1, \, t_1, \, t_2 \, }
(-1)^{t_2} \, 2^{t_1}.
\end{equation}
If we set $t_2 = i$ then $t_1 = n{-}1{-}3i$, and the sum \eqref{d2p2} is empty when
$n{-}1{-}3i < 0$, or equivalently $i > \frac{n-1}{3}$. Hence \eqref{d2p2} specializes to the stated result.
\end{proof}

This gives what we believe to be the first explicit non-recursive formula for sequence \seqnum{A236339}:
\[
1, \; 2, \; 8, \; 39, \; 212, \; 1232, \; 
7492, \; 47082, \; 303336, \; 1992826, \; 13299624, \; 89912992, \; \dots.
\]
Likewise, for the case of $d=3, p=2$, Proposition~\ref{binarytheorem} implies the following.

\begin{corollary}
\label{cor32}
The number of dyadic partitions of the unit cube into $n$ subrectangles
is
\[
C_{3,2}(n) =
\frac{1}{n}
\sum_{j=0}^{\lfloor \frac{n{-}1}{7} \rfloor}
\sum_{i=0}^{\lfloor \frac{n{-}1{-}7j}{3} \rfloor}
\binom{ 2(n{-}1{-}i{-}3j) }{ n{-}1, n{-}1{-}3i{-}7j, i, j}
(-1)^{i} \, 3^{n-1-2i-7j}.
\]
\end{corollary}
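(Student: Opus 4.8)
The plan is to specialize Theorem~\ref{binarytheorem} to the parameters $d=3$, $p=2$ and then relabel the summation indices. With $d=3$ the constraint $\sum_{k=1}^{d}t_k(p^k-1)=n-1$ becomes $t_1+3t_2+7t_3=n-1$, and the relevant binomial coefficients are $\binom{3}{1}=3$, $\binom{3}{2}=3$, $\binom{3}{3}=1$, so the factor $\prod_{k=1}^{3}\binom{d}{k}^{t_k}$ collapses to $3^{t_1+t_2}$.

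Next I would pin down the sign. The exponent $t_k(k+1)$ is even for $k=1$ (it is $2t_1$) and for $k=3$ (it is $4t_3$), and equals $3t_2\equiv t_2\pmod 2$ for $k=2$; hence $\prod_{k=1}^{3}(-1)^{t_k(k+1)}=(-1)^{t_2}$, and altogether $\prod_{k=1}^{3}\big((-1)^{t_k(k+1)}\binom{3}{k}^{t_k}\big)=(-1)^{t_2}\,3^{t_1+t_2}$. This is the one place where a little care is needed; there is no genuine obstacle.

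Finally I would substitute $t_2=i$ and $t_3=j$, so that the constraint forces $t_1=n-1-3i-7j$. Then $t_1+t_2=n-1-2i-7j$ gives $3^{t_1+t_2}=3^{\,n-1-2i-7j}$, and $n-1+t_1+t_2+t_3=2(n-1-i-3j)$ is the top entry of the multinomial coefficient, whose bottom entries are $n-1,\ n-1-3i-7j,\ i,\ j$. The nonnegativity conditions $t_1,t_2,t_3\ge 0$ become $j\ge 0$ with $7j\le n-1$, i.e.\ $0\le j\le\lfloor(n-1)/7\rfloor$, together with $3i\le n-1-7j$, i.e.\ $0\le i\le\lfloor(n-1-7j)/3\rfloor$ (the condition $i\ge 0$ being automatic). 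Substituting all of this into the formula of Theorem~\ref{binarytheorem} and pulling out the overall factor $1/n$ yields exactly the stated double sum, completing the proof.
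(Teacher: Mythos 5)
Your proposal is correct and follows essentially the same route as the paper: specialize Theorem~\ref{binarytheorem} to $d=3$, $p=2$, note that the sign factor reduces to $(-1)^{t_2}$ and the binomial product to $3^{t_1+t_2}$, then substitute $t_2=i$, $t_3=j$, $t_1=n-1-3i-7j$ and read off the index ranges from $t_1\ge 0$. All the arithmetic checks out, including the top entry $2(n-1-i-3j)$ of the multinomial coefficient and the exponent $n-1-2i-7j$.
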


\begin{proof}
Similar to the proof of Corollary \ref{cor22}.
For $d=3$, $p=2$ Proposition~\ref{binarytheorem} gives
\begin{equation}
\label{d3p2}
C_{3,2}(n) =  \frac{1}{n}
\sum_{ \substack{ t_1, t_2,t_3 \geq 0 \\[1pt] t_1 + 3t_2 +7t_3 = n-1}}
\binom{ n{-}1{+}t_1{+}t_2{+}t_3 }{ \, n{-}1, \, t_1, \, t_2, \, t_3 } (-1)^{t_2} \, 3^{t_1+t_2}.
\end{equation}
If we set $t_2 = i$, $t_3 = j$ then $t_1 = n{-}1{-}3i{-}7j$, and the sum \eqref{d3p2}
is empty for $n{-}1{-}3i{-}7j < 0$.
Hence we may restrict the summation indices to
$0 \leq j \leq \frac{n-1}{7}$ and $0 \leq i \leq \frac{n-1-7j}{3}$,
and so \eqref{d3p2} simplifies to the stated result.
\end{proof}

This gives an explicit non-recursive formula for the sequence \seqnum{A236342}:
\[
1, \; 3, \; 18, \; 132, \; 1080, \; 9450, \; 
86544, \; 819154, \; 7949532, \; 78671736, \; 790930728, \; \dots.
\]
We remark that, while Proposition~\ref{binarytheorem} gives a simple and compact summation formula for $C_{d,p}(n)$, the summation contains $O(n^{d-1})$ terms for fixed $d$ and $p$. For a more computationally efficient method to obtain $C_{d,p}(n)$, one can use the fact that the generating function $y$ is differentiably-finite (since it satisfies~\eqref{dgfeq} and is thus an algebraic function), and generate a linear recurrence for the coefficients of $y$ based on that. For the details of this procedure, see~\cite[Chapter 6]{Stanley99}, ~\cite{BanderierD13}, and the references therein.

\section{Asymptotic behavior and growth rate}
\label{sec3}

We now consider the asymptotic behavior and growth rate of $C_{d,p}(n)$. 
Recall again the functional equation~\eqref{dgfeq}.
If we define the polynomial
\[
q_{d,p}(z) = \sum_{k=0}^d (-1)^k \binom{d}{k} z^{p^k},
\]
then~\eqref{dgfeq} can be restated simply as $q_{d,p}(y) = x$. 
We call this polynomial $q(z)$ when $d, p$ are clear from the context. 
Figure~\ref{figqz} shows the graph of $q(z)$ for small values of $d$ and $p$. 

\begin{figure}[ht]
\begin{center}
\begin{tabular}{cc}
\begin{tikzpicture}[scale =3, xscale=0.75,yscale=1, font = \scriptsize]
\def\xlb{-1}; \def\xub{1}; \def\ylb{-0.4};  \def\yub{0.4}; \def\buf{0.2};
\draw [->] (\xlb - \buf,0) -- (\xub + \buf,0);
\draw [->] (0, \ylb - \buf) -- (0, \yub + \buf);
\foreach \x in {\xlb ,...,\xub}
{
\ifthenelse{\NOT 0 = \x}{\draw[thick](\x ,-1pt) -- (\x ,1pt);}{}
\ifthenelse{\NOT 0 = \x}{\node[anchor=north] at (\x,0) (label) {{ $\x$}};}{}
}
\foreach \y in {-0.5, 0.5}
{
\draw[thick](-1pt,\y) -- (1pt,\y);
\node[anchor=east] at (0,\y) (label) {{ $\y$}};
}
\node[anchor=north east] at (0,0) (label3) {$0$};
\node[anchor=west] at (\xub + \buf,0) (label3) {$z$};
\node[anchor=south] at (0, \yub+\buf) (label3) {$q(z)$};
\draw[<->, thick, domain= -0.35 : 1.2, samples = 78] plot (\x, {\x - 2*\x * \x + \x*\x*\x*\x} );
\end{tikzpicture}

&\quad\quad\quad\quad

\begin{tikzpicture}[scale =3, xscale=0.75,yscale=1, font = \scriptsize]
\def\xlb{-1}; \def\xub{1}; \def\ylb{-0.4};  \def\yub{0.4}; \def\buf{0.2};
\draw [->] (\xlb - \buf,0) -- (\xub + \buf,0);
\draw [->] (0, \ylb - \buf) -- (0, \yub + \buf);
\foreach \x in {\xlb ,...,\xub}
{
\ifthenelse{\NOT 0 = \x}{\draw[thick](\x ,-1pt) -- (\x ,1pt);}{}
\ifthenelse{\NOT 0 = \x}{\node[anchor=north] at (\x,0) (label) {{ $\x$}};}{}
}
\foreach \y in {-0.5, 0.5}
{
\draw[thick](-1pt,\y) -- (1pt,\y);
\node[anchor=east] at (0,\y) (label) {{ $\y$}};
}
\node[anchor=north east] at (0,0) (label3) {$0$};
\node[anchor=west] at (\xub + \buf,0) (label3) {$z$};
\node[anchor=south] at (0, \yub+\buf) (label3) {$q(z)$};
\draw[<->, thick, domain= -0.32 : 1.15, samples = 74] plot (\x, {\x - 3*\x * \x + 3*\x*\x*\x*\x - \x*\x*\x*\x*\x*\x*\x*\x } );
\end{tikzpicture}

\\
$q_{2,2}(z) = z {-} 2 z^2 {+} z^4$ 
&\qquad 
$q_{3,2}(z) = z {-} 3 z^2 {+} 3 z^4 {-} z^8$ 
\\[3mm]

\begin{tikzpicture}[scale =3, xscale=0.75,yscale=1, font = \scriptsize]
\def\xlb{-1}; \def\xub{1}; \def\ylb{-0.4};  \def\yub{0.4}; \def\buf{0.2};
\draw [->] (\xlb - \buf,0) -- (\xub + \buf,0);
\draw [->] (0, \ylb - \buf) -- (0, \yub + \buf);
\foreach \x in {\xlb ,...,\xub}
{
\ifthenelse{\NOT 0 = \x}{\draw[thick](\x ,-1pt) -- (\x ,1pt);}{}
\ifthenelse{\NOT 0 = \x}{\node[anchor=north] at (\x,0) (label) {{ $\x$}};}{}
}
\foreach \y in {-0.5, 0.5}
{
\draw[thick](-1pt,\y) -- (1pt,\y);
\node[anchor=east] at (0,\y) (label) {{ $\y$}};
}
\node[anchor=north east] at (0,0) (label3) {$0$};
\node[anchor=west] at (\xub + \buf,0) (label3) {$z$};
\node[anchor=south] at (0, \yub+\buf) (label3) {$q(z)$};
\draw[<->, thick, domain= -1.08 : 1.08, samples = 109] plot (\x, {\x - 2*\x * \x* \x + \x*\x*\x*\x* \x* \x* \x* \x* \x} );
\end{tikzpicture}

&\quad\quad\quad\quad

\begin{tikzpicture}[scale =3, xscale=0.75,yscale=1, font = \scriptsize]
\def\xlb{-1}; \def\xub{1}; \def\ylb{-0.4};  \def\yub{0.4}; \def\buf{0.2};
\draw [->] (\xlb - \buf,0) -- (\xub + \buf,0);
\draw [->] (0, \ylb - \buf) -- (0, \yub + \buf);
\foreach \x in {\xlb ,...,\xub}
{
\ifthenelse{\NOT 0 = \x}{\draw[thick](\x ,-1pt) -- (\x ,1pt);}{}
\ifthenelse{\NOT 0 = \x}{\node[anchor=north] at (\x,0) (label) {{ $\x$}};}{}
}
\foreach \y in {-0.5, 0.5}
{
\draw[thick](-1pt,\y) -- (1pt,\y);
\node[anchor=east] at (0,\y) (label) {{ $\y$}};
}
\node[anchor=north east] at (0,0) (label3) {$0$};
\node[anchor=west] at (\xub + \buf,0) (label3) {$z$};
\node[anchor=south] at (0, \yub+\buf) (label3) {$q(z)$};
\draw[<->, thick, domain= -1.03 : 1.03, samples = 109] 
plot (\x, {\x - 3*\x*\x*\x + 3*\x*\x*\x*\x*\x*\x*\x*\x*\x - \x*\x*\x*\x*\x*\x*\x*\x*\x*\x*\x*\x*\x*\x*\x*\x*\x*\x*\x*\x*\x*\x*\x*\x*\x*\x*\x} );
\end{tikzpicture}
\\
$q_{2,3}(z) = z {-} 2 z^3 {+} z^9$
&\qquad
$q_{3,3}(z) = z {-} 3 z^3 {+} 3 z^9 {-} z^{27}$
\end{tabular}
\end{center}
\vspace{-5mm}
\caption{Graphs of the polynomials $q_{d,p}(z)$ for $(d,p) = (2,2), (3,2), (2,3), (3,3)$}
\label{figqz}
\end{figure}

\subsection{Asymptotic behavior}

For $p=d=2$, Kot\v{e}\v{s}ovec gave the following asymptotic formula
(OEIS, sequence \seqnum{A236339}): 
\[
C_{2,2}(n) \,\sim\, \frac{1}{\sqrt{ -2\pi q''(s)} } \, n^{-3/2} q(s)^{1/2-n}.
\]
Here $q(y) = y^4-2y^2+y$, 
hence $q'(y) = 4y^3 - 4y + 1$ and $q''(y) = 12y^2 - 4$, and $s$ is the smallest positive real number where $q'(s) = 0$. 
Kot\v{e}\v{s}ovec's proof~\cite{Kotesovec18} relies on a theorem of Bender~\cite{Bender} 
that was later corrected and refined; see~\cite[Theorem VII.3, p.~468]{FS2}.
We provide an asymptotic formula for $C_{d,p}(n)$ for all $d \geq 1$ and $p \geq 2$. An important property of the power series for $C_{d,p}(n)$ is its periodicity:
 
\begin{definition}
A power series $\phi$ in the variable $z$ is \emph{$k$-periodic} 
for some positive integer $k$ if $k$ is maximal subject to the condition that
there exists a unique $\ell \in \set{0,1 , \ldots, k{-}1}$ such that $[z^n] \phi(z) = 0$ 
for all $n \not\equiv \ell$ (mod $k$). 
\end{definition}

Then we have the following:

\begin{lemma}
The power series $y = \displaystyle\sum_{n \geq 1} C_{d,p}(n)x^n$ is $(p{-}1)$-periodic.
\end{lemma}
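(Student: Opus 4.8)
The plan is to establish the $(p{-}1)$-periodicity of $y$ by combining the combinatorial definition of $C_{d,p}(n)$ with an induction based on the functional equation~\eqref{dgfeq}. First I would observe that $C_{d,p}(n)$ is nonzero only when $n \equiv 1 \pmod{p{-}1}$: this is immediate from Definition~\ref{defndecomp}, since every $(d,p,n)$-decomposition has $n = 1 + m(p{-}1)$ for some integer $m \ge 0$, and conversely $C_{d,p}(1) = 1 \ne 0$ (the trivial decomposition) so the residue class $\l = 1$ is actually attained. Thus $[x^n]y = 0$ for all $n \not\equiv 1 \pmod{p{-}1}$, which shows $y$ is $k$-periodic for some $k$ that is a multiple of $p{-}1$.

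\textbf{The remaining work} is to prove $k$ is exactly $p{-}1$, i.e.\ that no larger period occurs. For this I would argue that $C_{d,p}(n) \ne 0$ for \emph{every} $n \equiv 1 \pmod{p{-}1}$ with $n \ge 1$; equivalently $C_{d,p}(1 + m(p{-}1)) \ge 1$ for all $m \ge 0$. This is again geometrically transparent: starting from $S_0 = \{(0,1)^d\}$ one may repeatedly subdivide (say, always along the first coordinate axis and always the leftmost block), producing a valid $(d,p,n)$-decomposition for each admissible $n$. Hence the support of $y$ is precisely $\{\,1 + m(p{-}1) : m \ge 0\,\}$, an arithmetic progression of common difference exactly $p{-}1$, so no integer larger than $p{-}1$ can serve as a period; this forces $k = p{-}1$ and $\l$ is the residue of $1$ modulo $p{-}1$ (namely $\l = 1$ when $p \ge 3$ and $\l = 0$ when $p = 2$).

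\textbf{Alternatively}, if one prefers an argument routed through the functional equation rather than the geometry, one can induct on $n$ using the recursive formula
\[
C_{d,p}(n) = \sum_{k=1}^d (-1)^{k-1}\binom{d}{k}\!\!\sum_{\substack{n_1,\dots,n_{p^k}\ge 1\\ n_1+\cdots+n_{p^k}=n}}\prod_{i=1}^{p^k} C_{d,p}(n_i)
\]
displayed after Theorem~\ref{BDTheorem}: since $p^k \equiv 1 \pmod{p-1}$, a composition of $n$ into $p^k$ parts each supported on the progression $1 + (p{-}1)\mathbb{Z}_{\ge 0}$ sums to something $\equiv p^k \equiv 1 \pmod{p-1}$, matching $n \equiv 1$; and positivity of $C_{d,p}(1) = 1$ together with the $k=1$ term (which has the single positive sign $(-1)^0\binom{d}{1} = d > 0$ and whose summands are products of previously-established positive values) propagates nonvanishing up the progression, provided one checks the alternating higher terms do not conspire to cancel — which is exactly where the geometric argument above is cleaner.

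\textbf{The main obstacle} is the lower bound $C_{d,p}(n) \ge 1$ for all admissible $n$: ruling out accidental cancellation purely from~\eqref{dgfeq} is delicate because of the alternating signs, so I expect the cleanest route is to cite the combinatorial Definition~\ref{defndecomp} directly and exhibit an explicit family of decompositions, as sketched above. Everything else is bookkeeping about residues modulo $p-1$.
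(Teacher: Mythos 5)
Your proof is correct and its core is the same as the paper's: each $p$-ary partition increases the number of blocks by $p-1$, so $C_{d,p}(n)=0$ unless $n\equiv 1 \pmod{p-1}$. You are in fact slightly more careful than the paper, which stops there and never verifies the maximality clause in the definition of $k$-periodicity; your observation that $C_{d,p}(1+m(p-1))\ge 1$ for every $m$ (via repeated subdivision along a fixed axis) is exactly what is needed to rule out a larger period, and your instinct to avoid the sign-alternating recursion for that lower bound is sound.
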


\begin{proof}
We have $C_{d,p}(1)= 1$ for all $d$ and $p$, and each subsequent $p$-ary partition 
of a subrectangle increases the number of regions by $p{-}1$. 
Hence $C_{d,p}(n) = 0$ if $n \not\equiv 1$ (mod $p{-}1$). 
\end{proof}

The following result is a combination of \cite[Theorem VI.6, p.~404]{FS2} and \cite[Note VI.17, p.~407]{FS2}:

\begin{theorem}\label{FSasymp}
Let $y$ be a power series in $x$.
Let $\phi\colon \mathbb{C} \to \mathbb{C}$ be a function with the following properties:
\begin{enumerate}
\item[(i)]
$\phi$ is analytic at $z = 0$ and $\phi(0) > 0$;
\item[(ii)]
$y = x \phi(y)$;
\item[(iii)]
$[z^n] \phi(z) \geq 0$ for all $n \geq 0$, and $[z^n] \phi(z) \neq 0$ for some $n \geq 2$.
\item[(iv)]
There exists a (then necessarily unique) real number $s \in (0,r)$ such that
$\phi(s) = s \phi'(s)$, where $r$ is the radius of convergence of $\phi$.
\item[(v)]
The power series of $\phi$ is $k$-periodic.
\end{enumerate}
Then for $n \equiv 1$ \tn{(mod $k$)}, 
\[
[x^n] y \,\sim\, 
k \, \sqrt{ \frac{\phi(s)}{2\pi  \phi''(s)}} \, n^{-3/2} \left(  \phi'(s) \right)^n.
\]
\end{theorem}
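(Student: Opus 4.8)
The statement is a repackaging of the \emph{smooth inverse-function schema} of Flajolet and Sedgewick, so the plan is to verify that hypotheses (i)--(v) are exactly what that schema requires and then to extract the constants. \emph{First}, I would remove the periodicity. Since $\phi(0)>0$, the residue $\ell$ in the definition of $k$-periodicity is $0$, so $\phi(z)=\Phi(z^{k})$ for a power series $\Phi$ with $\Phi(0)=\phi(0)>0$, nonnegative coefficients, a nonzero coefficient of index $\ge 1$ (forced by (iii)), and $\Phi$ aperiodic. The substitution $v=y^{k}$, $w=x^{k}$ turns $y=x\,\Phi(y^{k})$ into $v=w\,\widetilde\Phi(v)$ with $\widetilde\Phi(v)=\Phi(v)^{k}$, which is analytic at $0$, positive there, has nonnegative coefficients, is nonlinear, and is aperiodic. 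One checks that the characteristic equation $\widetilde\Phi(\sigma)=\sigma\,\widetilde\Phi'(\sigma)$ is, under $\sigma=s^{k}$, equivalent to $\phi(s)=s\,\phi'(s)$, so by (iv) it has the unique positive solution $\sigma=s^{k}$ inside the disk of convergence of $\widetilde\Phi$.

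\emph{Second}, I would apply Theorem~VI.6 of~\cite{FS2} to $v=w\,\widetilde\Phi(v)$. The functional inverse $w=v/\widetilde\Phi(v)$ increases from $0$ on $(0,\sigma)$ with a nondegenerate interior maximum at $v=\sigma$: nonnegativity of the coefficients of $\widetilde\Phi$ together with nonlinearity force $\widetilde\Phi''(\sigma)>0$, and one computes the second derivative of $v/\widetilde\Phi(v)$ at $\sigma$ to be $-\sigma\,\widetilde\Phi''(\sigma)/\widetilde\Phi(\sigma)^{2}<0$. Hence, by Pringsheim's theorem together with the aperiodicity of $\widetilde\Phi$, the point $w=\rho^{k}$ with $\rho:=s/\phi(s)=1/\phi'(s)$ (the last equality by (iv)) is the unique dominant singularity of $v(w)$; it is a square-root branch point, and $v(w)$ continues analytically to a $\Delta$-domain there, admitting a convergent expansion in powers of $\sqrt{1-w/\rho^{k}}$. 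Singularity analysis (the transfer theorem) then yields the asymptotics of $[w^{m}]v(w)$.

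\emph{Third}, I would translate back. The equation $y=x\,\Phi(y^{k})$ forces $[x^{n}]y=0$ unless $n\equiv 1\pmod{k}$, and $[x^{km+1}]y=[w^{m}]\bigl(\Phi(v(w))\bigr)$; since $\Phi'(\sigma)\ne0$, the series $\Phi(v(w))$ inherits the same square-root singularity of $v$ at $w=\rho^{k}$, so expanding $v$ and then $\Phi\circ v$ to first order in $\sqrt{1-w/\rho^{k}}$, applying $[w^{m}](1-w/\rho^{k})^{1/2}\sim-\tfrac{1}{2\sqrt{\pi}}\,m^{-3/2}\rho^{-km}$, and substituting $m=(n-1)/k$ gives, after simplification,
\[
[x^{n}]y\;\sim\;k\,\sqrt{\frac{\phi(s)}{2\pi\,\phi''(s)}}\;n^{-3/2}\,\phi'(s)^{n},
\]
which is precisely Note~VI.17 of~\cite{FS2} specialized to this schema. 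I expect the main obstacle to be not conceptual but the constant bookkeeping in this last step: one must verify that the various $k$-dependent factors introduced by the substitution $z\mapsto z^{k}$ (the $k^{3/2}$ coming from $m^{-3/2}=k^{3/2}(n-1)^{-3/2}$, the $k$-th powers hidden in $\widetilde\Phi$, and the reindexing $m=(n-1)/k$) collapse to a single overall factor $k$, and that the remaining constant reduces exactly to $\sqrt{\phi(s)/(2\pi\,\phi''(s))}$; the analytic core --- Pringsheim's theorem, uniqueness of the dominant singularity, and $\Delta$-analyticity --- is imported verbatim from Theorem~VI.6 and Note~VI.17 of~\cite{FS2}.
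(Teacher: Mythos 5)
Your proposal is correct and takes essentially the same route as the paper: the paper gives no proof of this statement at all, asserting only that it is ``a combination of Theorem VI.6 and Note VI.17'' of Flajolet--Sedgewick, which is precisely the analytic core you import. Your explicit reduction of the $k$-periodic case to the aperiodic singular-inversion schema via $v=y^{k}$, $w=x^{k}$ (with the check that the characteristic equation transforms to $\phi(s)=s\phi'(s)$ and the constant bookkeeping collapsing to the single factor $k$) is a sound way to make that combination precise.
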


Using Theorem~\ref{FSasymp}, we obtain the following:

\begin{theorem}\label{Cdpnasymp}
For all integers $d \geq 1$ and $p \geq 2$, define
\[
q(z) = \sum_{k=0}^d (-1)^k \binom{d}{k} z^{p^k}.
\] 
Let $s > 0$ be the smallest real number such that $q'(s) = 0$. 
Then for $n \equiv 1$ \tn{(mod $p{-}1$)},
\[
C_{d,p}(n) \,\sim\, \frac{p{-}1}{ \sqrt{ -2\pi q''(s)}} \, n^{-3/2} \, q(s)^{\tfrac12-n}.
\]
\end{theorem}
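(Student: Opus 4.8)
The plan is to apply Theorem~\ref{FSasymp} to $y = y_{d,p}(x)$ with the choice
\[
\phi(z) = \frac{z}{q(z)}.
\]
Rearranging the functional equation $q(y) = x$ of Theorem~\ref{BDTheorem} gives $y = x\big(y/q(y)\big) = x\,\phi(y)$, which is hypothesis~(ii). The lowest-degree term of $q$ is $(-1)^0\binom{d}{0}z = z$, so $q$ has a simple zero at the origin and $\phi$ is analytic at $z = 0$ with $\phi(0) = 1 > 0$ (hypothesis~(i)). Since $p-1$ divides $p^k - 1$ for every $k$, we may write $q(z) = z\,h(z^{p-1})$ for a polynomial $h$ with $h(0) = 1$; then $\phi(z) = 1/h(z^{p-1})$ is a power series in $z^{p-1}$ whose coefficients at $z^0$ and $z^{p-1}$ are $1$ and $d$, hence nonzero, so $\phi$ is exactly $(p{-}1)$-periodic (hypothesis~(v)) and we take $k = p-1$. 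Finally $[z^{2(p-1)}]\phi(z) = d^2 \ne 0$, so $\phi$ is not a polynomial of degree $\le 1$; this is the easy part of hypothesis~(iii).

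The substantive part of hypothesis~(iii) is positivity: $[z^n]\phi(z) = [z^n]\big(z/q(z)\big) \ge 0$ for every $n \ge 0$. This does \emph{not} follow merely from $C_{d,p}(n) \ge 0$: a relation $y = x\phi(y)$ with $y$ having nonnegative coefficients does not force $\phi$ to have nonnegative coefficients. One way to establish it is to read it off the combinatorial model of Section~\ref{sec4}, exhibiting $C_{d,p}$ as the counting sequence of a simply generated family of trees whose per-node weight generating function is exactly $\phi$, so that each coefficient $[z^n]\phi$ is a count of gadgets and hence a nonnegative integer. Alternatively, one can argue by induction on $d$, using the identity $q_{d,p}(z) = q_{d-1,p}(z) - q_{d-1,p}(z^p)$ --- which follows from writing $q_{d,p}(z) = (I - \sigma)^d[z]$ for the linear operator $\sigma\colon z^j \mapsto z^{pj}$ and expanding by the binomial theorem --- to control $z/q_{d,p}(z)$ in terms of $z/q_{d-1,p}(z)$. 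I expect this positivity to be the main obstacle in the proof.

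Granting hypothesis~(iii), hypothesis~(iv) follows by a standard monotonicity argument. Let $r$ be the radius of convergence of $\phi$; by Pringsheim's theorem $r$ is the least positive real zero of $q$, and $q > 0$ on $(0,r)$. The function $g(z) = z\phi'(z)/\phi(z)$ is nondecreasing on $(0,r)$ --- it is the mean of the distribution $n \mapsto [z^n]\phi \cdot z^n$, which shifts toward larger $n$ as $z$ increases. A short computation with $\phi = z/q$ gives
\[
g(z) - 1 \;=\; \frac{z\phi'(z) - \phi(z)}{\phi(z)} \;=\; -\,\frac{z\,q'(z)}{q(z)}
\qquad (0 < z < r),
\]
so the sign of $g(z) - 1$ is opposite to that of $q'(z)$ on $(0,r)$. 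Since $q'(0) = 1 > 0$ while $q(z) \to 0^{+}$ and $q'(z) \to q'(r) < 0$ as $z \to r^{-}$, the function $g$ increases continuously from $0$ to $+\infty$; hence $g(s) = 1$ for a \emph{unique} $s \in (0,r)$, and this $s$ is the least positive zero of $q'$. As $g$ crosses $1$ from below at $s$, the derivative $q'$ passes from positive to negative there, so $s$ is a strict local maximum of $q$, with $q''(s) < 0$ and $q(s) > q(0) = 0$.

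It remains to evaluate the conclusion of Theorem~\ref{FSasymp}. Using $q'(s) = 0$,
\[
\phi(s) = \frac{s}{q(s)}, \qquad
\phi'(s) = \frac{q(s) - s\,q'(s)}{q(s)^2} = \frac{1}{q(s)}, \qquad
\phi''(s) = -\,\frac{s\,q''(s)}{q(s)^2},
\]
so $\big(\phi'(s)\big)^n = q(s)^{-n}$ and $\phi(s)/\phi''(s) = -\,q(s)/q''(s) > 0$. Substituting into Theorem~\ref{FSasymp} with $k = p-1$ gives, for $n \equiv 1$ \tn{(mod $p{-}1$)},
\[
C_{d,p}(n) = [x^n]y \;\sim\; (p{-}1)\sqrt{\frac{-\,q(s)}{2\pi\,q''(s)}}\; n^{-3/2}\,q(s)^{-n}
\;=\; \frac{p{-}1}{\sqrt{-2\pi\,q''(s)}}\; n^{-3/2}\,q(s)^{\tfrac12 - n},
\]
as claimed; for $d = p = 2$ this recovers Kot\u{e}\u{s}ovec's formula since then $p-1 = 1$.
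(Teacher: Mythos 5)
Your overall strategy is exactly the paper's: apply Theorem~\ref{FSasymp} with $\phi(z) = z/q(z)$, verify its hypotheses, and simplify $\phi'(s)$ and $\phi''(s)$ using $q'(s)=0$. Your treatment of (i), (ii), (v), hypothesis (iv), and the final computation all match the paper's proof; in particular your monotonicity argument for $g(z) = z\phi'(z)/\phi(z)$ is equivalent to the paper's observation that $\phi(z) - z\phi'(z)$ is decreasing on $(0,r)$ because $\phi''>0$ there, and both existence arguments for $s$ amount to Rolle's theorem applied to $q$ on $[0,r]$.

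However, there is a genuine gap: you never prove hypothesis (iii), the nonnegativity of the coefficients $a_n = [z^n]\phi(z)$. You correctly identify it as the crux and you are right that it does not follow from $C_{d,p}(n) \ge 0$, but neither of the two routes you sketch is carried out, and the first is doubtful as stated: the interchange maximal trees of Section~\ref{sec4} are not a simply generated family in the usual sense, since interchange maximality is a global condition on subtrees (condition $2'$) rather than a per-node weight, so there is no evident ``gadget'' interpretation of $[z^n]\phi$. The paper closes this gap with a self-contained induction: from $\phi(z)\cdot q(z)/z = 1$ one gets the recurrence $a_n = \sum_{k=1}^d (-1)^{k-1}\binom{d}{k} a_{n-(p^k-1)}$, and the paper proves the stronger statement $a_n \ge (d-1)\,a_{n-(p-1)}$ by induction on $n$, pairing terms of the alternating sum and using the inductive bound $a_{n-(p^k-1)} \ge (d-1)^p a_{n-(p^{k+1}-1)}$ together with $\binom{d}{k}(d-1) \ge \binom{d}{k+1}$ to show each pair is nonnegative. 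Your second suggested route, via $q_{d,p}(z) = q_{d-1,p}(z) - q_{d-1,p}(z^p)$, could conceivably be made to work, but it reduces to showing that $q_{d-1,p}(z^p)/q_{d-1,p}(z)$ has nonnegative coefficients, which is itself a nontrivial claim you would have to prove. Until some argument of this kind is supplied, the proof is incomplete.
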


\begin{proof}
Since $q(y) = x$, we have $y=x\phi(y)$ where 
\begin{equation}\label{Phi0}
\phi(z) = \frac{z}{q(z)} = \frac{1}{\sum_{k=0}^d (-1)^k \binom{d}{k} z^{p^k-1}}.
\end{equation}
We now verify the analytic conditions listed in Theorem~\ref{FSasymp}. First, we show that $\phi(z)$ satisfies condition $(v)$ for $k=p-1$. 
The power series of $\phi$ has the form
\begin{equation}\label{Phi1}
\phi(z) 
= 
\sum_{n \geq 0} a_n z^n
=
\frac{1}{\sum_{k=0}^d (-1)^k \binom{d}{k} z^{p^k-1}} 
= 
\sum_{j \geq 0} \left( \sum_{k=1}^d (-1)^{k-1} \binom{d}{k} z^{p^k-1}  \right)^j.
\end{equation}
Since $p{-}1 \mid p^k{-}1$ for all $k \geq 1$, 
we see that $a_n = 0$ when $p{-}1 \nmid n$. Also, notice that $a_0=1$ and $a_{p-1}=d$, so the periodicity of $p-1$ is indeed maximal.

Second, we show that $\phi(z)$ satisfies $(iii)$. It is shown above that $a_n \neq 0$ only when $p-1 | n$, so we will focus on $n$'s that are multiples of $p-1$. We prove by induction on $n$ that for all $d \geq 1$, $p \geq 2$, and $n \geq p-1$ we have
\[
a_n \geq (d-1) a_{n-(p-1)}.
\]
For the basis, from \eqref{Phi1} we see that $a_n = d^{n/(p-1)}$ for all $n < p^2 -1$.
For the inductive step, from \eqref{Phi0} we see that $a_n$ satisfies the recurrence relation
\begin{align*}
a_n &= \sum_{k=1}^d (-1)^{k-1} \binom{d}{k} a_{n - (p^k-1)} 
\\
&= (d{-}1)a_{n-(p-1)} + \underbrace{\left( a_{n-(p-1)} - \binom{d}{2} a_{n-(p^2-1)} \right)}_{\tn{(I)}} + \underbrace{\sum_{k=3}^d (-1)^{k-1} \binom{d}{k} a_{n - (p^k-1)}}_{\tn{(II)}}. 
\end{align*}
To show that $a_n \geq (d-1)a_{n-(p-1)}$, we verify that the terms (I) and (II) are both nonnegative.
By the inductive hypothesis, for all values of $d$ and $p$ we have
\[
a_{n-(p-1)} 
\geq 
(d{-}1)^{p} a_{n-(p-1)-p(p-1)}  
= 
(d{-}1)^p a_{n-(p^2-1)}  
\geq 
\binom{d}{2} a_{n-(p^2-1)},
\]
and so (I) is nonnegative. Furthermore, for $k \geq 3$ we have
\[
\binom{d}{k} a_{n-(p^k-1)} 
\geq 
\binom{d}{k} (d{-}1)^{p^k} a_{n-(p^{k+1}-1)} \geq 
\binom{d}{k} (d{-}1) a_{n-(p^{k+1}-1)} \geq 
\binom{d}{k{+}1} a_{n-(p^{k+1}-1)}.
\]
The first inequality follows from the inductive hypothesis, and the third from the fact that
\[
\binom{d}{k}(d{-}1) \geq \binom{d}{k{+}1}
\] 
for all $d$, $k$. Thus, the term (II) is also nonnegative. Since $a_n \geq (d-1)a_{n-(p-1)}$, we hence conclude that $a_n \geq 0$ for all $n \geq 1$.

We now consider condition $(iv)$. 
Clearly, 
\begin{equation}\label{Phi2}
\phi'(z) = \frac{q(z) - zq'(z)}{q(z)^2},
\quad\quad\quad
\phi''(z) = \frac{-zq(z)q''(z) - 2q(z)q'(z) + 2z(q'(z))^2}{q(z)^3}.
\end{equation}
Let $r$ be the radius of convergence of $\phi$ at $z = 0$. 
Since $\phi(z) = z/q(z)$, we see that $r$ is the smallest positive solution to $q(r) = 0$. 
We show there exists $s \in (0,r)$ with $\phi(s) = s\phi'(s)$. 
Notice that
\[
\phi(s) = s \phi'(s) 
\quad\implies\quad 
\frac{s}{q(s)} = s \left( \frac{q(s) - sq'(s)}{q(s)^2} \right) 
\quad\implies\quad 
sq'(s) = 0.
\]
Since $q(0) = q(r)=0$ and $q$ is differentiable, it follows that $q'(s) =0$ 
for some $s \in (0,r)$. 

Now that the analytic assumptions on $\phi(z)$ have been verified, 
we may establish the asymptotic formula. 
Since $q'(s) = 0$, the expressions in \eqref{Phi2} simplifies to
\[
\phi'(s) = \frac{1}{q(s)}, 
\quad\quad\quad 
\phi''(s) = \frac{-sq''(s)}{q(s)^2}.
\]
Therefore, when $n \equiv 1$ (mod $p{-}1$) we have
\begin{align*}
C_{d,p}(n) = [x^n] y 
&\;\sim\;
(p{-}1) \, \sqrt{ \frac{\phi(s)}{2 \pi \phi''(s)} } \; n^{-3/2} \, \phi'(s)^n 
\\
&\;=\; 
(p{-}1) \, 
\sqrt{ 
\frac{ 
s / q(s)
}
{
2 \pi \big( -s q''(s) / q(s)^2 \big)
}
} 
\; n^{-3/2}  \left( \frac{1}{q(s)} \right)^n 
\\
&\;=\; 
\frac{p{-}1}{ \sqrt{ -2\pi q''(s)  }} \, n^{-3/2} \, q(s)^{1/2-n},
\end{align*}
and this completes the proof.
\end{proof}

\subsection{Growth rate}

We define the growth rate of $C_{d,p}(n)$ as
\[
\mathcal{G}_{d,p} = 
\lim_{m \to \infty} 
\frac{ C_{d,p} \big( (m{+}1)(p{-}1)+1) }{ C_{d,p} \big( m(p{-}1)+1 \big) }.
\]

Then we have the following:

\begin{corollary}\label{asym1}
Given fixed integers $d \geq 1$ and $p\geq 2$,
\[
\mathcal{G}_{d,p} = \frac{1}{(q(s))^{p-1}},
\]
where $s$ is the smallest positive real number where $q'(s) =0$.
\end{corollary}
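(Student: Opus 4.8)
The plan is to derive Corollary~\ref{asym1} directly from the asymptotic estimate of Theorem~\ref{Cdpnasymp}, since the growth rate is exactly the limit of a ratio of consecutive nonzero values of $C_{d,p}$. First I would set $n = m(p{-}1)+1$, so that $(m{+}1)(p{-}1)+1 = n + (p{-}1)$, and observe that both indices are $\equiv 1 \pmod{p-1}$, so that Theorem~\ref{Cdpnasymp} applies to each of $C_{d,p}(n)$ and $C_{d,p}(n+(p{-}1))$. Dividing the two asymptotic expressions, the constant prefactor $\dfrac{p-1}{\sqrt{-2\pi q''(s)}}$ cancels, leaving
\[
\frac{C_{d,p}\big(n+(p{-}1)\big)}{C_{d,p}(n)}
\;\sim\;
\left( \frac{n+(p{-}1)}{n} \right)^{\!-3/2} q(s)^{-(p-1)}.
\]

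Next I would check that this manipulation is legitimate, i.e.\ that the two asymptotic relations being divided involve quantities that are eventually nonzero. This amounts to noting $q(s) \neq 0$: since $q(z) = z - \binom{d}{1} z^p + \cdots$, we have $q(z) \sim z$ as $z \to 0^+$, and by the characterization of $s$ in Theorem~\ref{Cdpnasymp} we have $s \in (0,r)$ where $r$ is the least positive zero of $q$; hence $q$ is strictly positive on $[0,s]$ and $q(s)^{-(p-1)}$ is a well-defined positive real. Because the asymptotic formula of Theorem~\ref{Cdpnasymp} is then a genuine asymptotic equivalence, the ratio of the two relations may be taken termwise.

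Finally, I would let $m \to \infty$ (equivalently $n \to \infty$ through values $\equiv 1 \pmod{p-1}$). The polynomial correction factor satisfies $\left( \frac{n+(p-1)}{n} \right)^{-3/2} \to 1$, while $q(s)^{-(p-1)}$ does not depend on $n$, so
\[
\mathcal{G}_{d,p}
= \lim_{m \to \infty} \frac{C_{d,p}\big((m{+}1)(p{-}1)+1\big)}{C_{d,p}\big(m(p{-}1)+1\big)}
= q(s)^{-(p-1)}
= \frac{1}{(q(s))^{p-1}},
\]
which is the assertion.

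I do not expect any substantive obstacle: all the analytic work (existence and uniqueness of $s$, nonnegativity of the coefficients of $\phi$, periodicity, and the asymptotic itself) has already been carried out in the proof of Theorem~\ref{Cdpnasymp}. The only point requiring a moment's care is confirming $q(s) > 0$ so that the division of asymptotics is valid and the answer is a finite positive number, and this is immediate from $s$ lying strictly inside the interval on which $q$ is positive.
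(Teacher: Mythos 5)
Your argument is correct and is exactly the intended reading of the paper's one-line proof ("follows immediately from Theorem~\ref{Cdpnasymp}"): divide the two asymptotic expressions at $n$ and $n+(p{-}1)$, note the constant and the $n^{-3/2}$ factors cancel in the limit, and use $q(s)>0$ to justify the division. The only nitpick is that $q$ is strictly positive on $(0,s]$ rather than $[0,s]$ since $q(0)=0$, but this does not affect the conclusion.
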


\begin{proof}
This follows immediately from Theorem \ref{Cdpnasymp}.
\end{proof}

We computed $\mathcal{G}_{d,p}$ for various $d$ and $p$; see Figure \ref{figureGdp}.
For $d=1$ (the familiar $p$-ary Catalan numbers) we have
\[
\mathcal{G}_{1,p} = \frac{p^p}{(p{-}1)^{p-1}},
\]
which by Stirling's formula grows almost linearly in $p$ since
$\mathcal{G}_{1,p+1} - \mathcal{G}_{1,p} \approx e$
for $p \geq 1$.
Figure~\ref{figureGdp} suggests that $\mathcal{G}_{d,p}$ also grows almost linearly in $d$
for $d \geq 1$. In fact $\mathcal{G}_{d,p} \approx d \, \mathcal{G}_{1,p}$ for all values of $d,p$ we checked.

\begin{figure}[htb]
\begin{center}
\begin{tikzpicture}[scale = 0.5, xscale = 1.2, yscale = 0.13, font = \small, word node/.style={font=\small}]]
\def\d{12};
\def\xlb{0};
\def\xub{\d};
\def\ylb{0};
\def\yub{60};
\def\xbuf{0.5};
\def\ybuf{5};
\draw [->] (0,0) -- (\xub + \xbuf,0);
\draw [->] (0,0) -- (0, \yub + \ybuf);
\foreach \x in {\xlb ,1, ...,\xub}
{
\ifthenelse{\NOT 0 = \x}{\draw[thick](\x ,-20pt) -- (\x ,20pt);}{}
\ifthenelse{\NOT 0 = \x}{\node[anchor=north] at (\x,0) (label) {{$\x$}};}{}
}
\foreach \y in {10, 20,30,40,50,60}
{
\draw[thick](-2pt, \y ) -- (2pt, \y);
\node[anchor=east] at (0,\y) (label) {{ $\y$}};
}
\node[anchor=north east] at (0,0) (label3) { $0$};
\node[anchor=west] at (\xub + \xbuf,0) (label3) {$d$};
\node[anchor=south] at (0, \yub+\ybuf) (label3) {$\footnotesize \mathcal{G}_{d,p} $};
\draw[thick](
   1,   4.0000)--(
   2, 7.7211)--(
   3, 11.6335)--(
   4, 15.5945)--(
   5, 19.5729)--(
   6, 23.5592)--(
   7, 27.5498)--(
   8, 31.5430)--(
   9, 35.5378)--(
  10, 39.5338)--(
  11, 43.5305)--(
  12, 47.5278
);
\def\y{0.7};
\foreach \position in {(
   1, 4.0000),(
   2, 7.7211),(
   3, 11.6335),(
   4, 15.5945),(
   5, 19.5729),(
   6, 23.5592),(
   7, 27.5498),(
   8, 31.5430),(
   9, 35.5378),(
  10, 39.5338),(
  11, 43.5305),(
  12, 47.5278
)}
{
\node[draw, circle, inner sep=0pt, minimum size = 0.1cm, fill] at \position {};
}
\node[anchor = south] at (13,   47.5278) {$p=2$};
\draw[thick](
1,    6.7500)--(
2,   13.4683)--(
3,   20.2220)--(
4,   26.9764)--(
5,   33.7299)--(
6,   40.4826)--(
7,   47.2347)--(
8,   53.9863
);
\def\y{0.7};
\foreach \position in {(
1,    6.7500),(
2,   13.4683),(
3,   20.2220),(
4,   26.9764),(
5,   33.7299),(
6,   40.4826),(
7,   47.2347),(
8,   53.9863
)}
{
\node[draw, circle, inner sep=0pt, minimum size = 0.1cm, fill] at \position {};
}
\node[anchor = south] at (9,   53.9863) {$p=3$};
\draw[thick](
1,    9.4815)--(
2,   18.9606)--(
3,   28.4431)--(
4,   37.9251)--(
5,   47.4068)--(
6,   56.8885
);
\def\y{0.7};
\foreach \position in {(
1,    9.4815),(
2,   18.9606),(
3,   28.4431),(
4,   37.9251),(
5,   47.4068),(
6,   56.8885
)}
{
\node[draw, circle, inner sep=0pt, minimum size = 0.1cm, fill] at \position {};
}
\node[anchor = south] at (7,   56.8885) {$p=4$};
\draw[thick](
1,     12.207)--(
2,   24.414)--(
3,   36.621)--(
4,   48.828)--(
5,   61.035
);
\def\y{0.7};
\foreach \position in {(
1,     12.207),(
2,   24.414),(
3,   36.621),(
4,   48.828),(
5,   61.035)}
{
\node[draw, circle, inner sep=0pt, minimum size = 0.1cm, fill] at \position {};
}
\node[anchor = south] at (6,   61.035) {$p=5$};
\draw[thick](
1,  14.930)--(
2,   29.860)--(
3,   44.790)--(
4,   59.720
);
\def\y{0.7};
\foreach \position in {(
1,  14.930),(
2,   29.860),(
3,   44.790),(
4,   59.720
)}
{
\node[draw, circle, inner sep=0pt, minimum size = 0.1cm, fill] at \position {};
}
\node[anchor = south] at (4,   59.720) {$p=6$};
\end{tikzpicture}
\end{center}
\vspace{-8mm}
\caption{The growth rate $\mathcal{G}_{d,p}$ for various $d$ and $p$}
\label{figureGdp}
\end{figure}

\section{Interpretation of $C_{d,p}(n)$ in terms of $p$-ary trees}\label{sec4}

Recall the three interpretations of the binary Catalan numbers from Section~\ref{subsec11}:
(i) placements of parentheses, 
(ii) binary trees, and 
(iii) bisections of the unit interval. 
For the numbers $C_{d,p}(n)$, we saw in the previous sections that 
(i) generalizes to the number of ways to apply $d$ distinct $p$-ary operations 
to $n$ arguments while satisfying the interchange laws, 
and (iii) generalizes to the number of ways to divide the $d$-dimensional hypercube 
into $n$ rectangular regions using $p$-ary partitions. 
In this section, we generalize interpretation (ii), 
and provide a combinatorial description of $C_{d,p}(n)$ in terms of certain $p$-ary trees 
by establishing a bijection between these trees and the set of $(d,p,n)$-decompositions.

\subsection{A combinatorial proof of Theorem~\ref{BDTheorem}}

In~\cite{BD}, the proof of Theorem~\ref{BDTheorem} mainly involves ideas from homological algebra. Herein, we give an alternative proof to their result that is purely combinatorial (and, in our opinion, more elementary). The ideas used in our proof will also be helpful later in this section when we discuss the correspondence between hypercube decompositions and trees. 

Given an integer $k >0$, we will let $[k]$ denote the set $\set{1, 2, \ldots, k}$ for convenience. Now, given integers $d \geq 1, p \geq 2$ and $n \geq 1$, let $\D_{d,p,n}$ denote the set of all $(d,p,n)$-decompositions, and let $\D_{d,p} = \bigcup_{n \geq 1} \D_{d,p,n}$. Next, given a set of indices $S \subseteq [d]$, let $D_S \in \D_{d,p}$ denote the decomposition obtained recursively as follows:
\begin{itemize}
\item
$D_{\emptyset}$ is the trivial decomposition $\set{(0,1)^d}$;
\item
Given $S \neq \emptyset$, let $i$ be any index in $S$, and define
\[
D_S := \bigcup_{R \in D_{ S \setminus \set{i}}} H_i(R).
\]
\end{itemize}

\begin{figure}[htb]
\begin{center}
$
\begin{array}[t]{ccc}
\begin{tikzpicture}
[scale=0.2, thick]
\draw(0,0) -- (12,0) -- (12,12) -- (0,12) -- (0,0);
\draw(6,0)--(6,12);
\end{tikzpicture}
~~~&~~~
\begin{tikzpicture}
[scale=0.2, thick]
\draw(0,0) -- (12,0) -- (12,12) -- (0,12) -- (0,0);
\draw(0,6)--(12,6);
\end{tikzpicture}
~~~&~~~
\begin{tikzpicture}
[scale=0.2, thick]
\draw(0,0) -- (12,0) -- (12,12) -- (0,12) -- (0,0);
\draw(6,0)--(6,12);
\draw(0,6)--(12,6);
\end{tikzpicture}
\\
D_{ \set{1}} &D_{ \set{2}} &D_{ \set{1,2}}\\
\end{array}
$
\caption{Illustrating the definition of $D_S$ in the case of $d=p=2$}\label{figDS}
\end{center}
\end{figure}

Notice that $D_{S}$ has $p^{|S|}$ regions of identical volume. Next, given decompositions $D,D' \in \D_{d,p}$, we say that $D'$ \emph{refines} $D$ --- denoted $D' \succeq D$ --- if $D'$ can be obtained from $D$ by a (possibly empty) sequence of $p$-splitting operations. Then, given $S \subseteq [d]$ and integer $n \geq 1$, define
\[
\mathcal{H}_{S,n} := \set{ D \in  \D_{d,p,n}, D \succeq D_S}.
\]
For instance, notice that in the case of $d=p=2$, if $D \in \D_{2,2,n}$ where $n \geq 2$, then either $D \succeq D_{\set{1}}$ or $D \succeq D_{\set{2}}$ (or both), as $D$ is a non-trivial decomposition that involves at least one $2$-splitting operation. Thus, in this case, we see that $\D_{2,2,n} = \mathcal{H}_{\set{1}, n} \cup \mathcal{H}_{\set{2}, n}$ for every $n \geq 2$. Moreover, it is easy to see that $\mathcal{H}_{\set{1},n} \cap \mathcal{H}_{\set{2},n} = \mathcal{H}_{\set{1,2},n}$. Hence, we obtain that, for every $n \geq 2$,
\[
C_{2,2}(n) = |\D_{2,2,n}| = \left| \mathcal{H}_{\set{1}, n} \right| +  \left| \mathcal{H}_{\set{2}, n} \right| -  \left| \mathcal{H}_{\set{1,2}, n} \right|.
\]
We shall see below that the above readily extends to an inclusion-exclusion argument that applies for all $d$ and $p$.

Next, given a decomposition $D$ that refines $D_S$, we would like to establish a correspondence between $D$ and a $p^{|S|}$-tuple of decompositions. To do that, it will be useful to define the notion of scaling. Given (rectangular) regions $R, R' \subseteq (0,1)^d$ where
\begin{align*}
R &= (a_1, b_1) \times(a_2, b_2) \times \cdots \times (a_d, b_d), \\
R' &= (a'_1, b'_1) \times (a'_2, b'_2) \times \cdots \times (a'_d, b'_d), \\
\end{align*}
define $L_{R \to R'} : \mR^d \to \mR^d$ such that
\[
\left[L_{R \to R'}(x) \right]_i := a'_i + \frac{b'_i-a'_i}{b_i-a_i}(x_i-a_i)
\]
for every $i \in [d]$. Notice that $L$ is an affine function that satisfies $\set{ L_{R \to R'}(x) : x \in R} = R'$. Now, given regions $R, R', R'' \subseteq (0,1)^d$, we define
\[
\tn{scale}_{R \to R'}(R'') = \set{L_{R \to R'}(x) : x \in R''}.
\]
We will always apply this scaling function in situations where $R'' \subseteq R$. Thus, intuitively, $\tn{scale}_{R \to R'}(R'')$ returns a region that is contained in $R'$, such that the relative position of $\tn{scale}_{R \to R'}(R'')$ inside $R'$  is the same of that of $R''$ inside $R$. In particular, given a decomposition $D \in \D_{d,p}$, 
\[
\set{ \tn{scale}_{(0,1)^d \to R'}(R) : R \in D}
\]
is a collection of sets that can be obtained from $R'$ by a sequence of $p$-splitting operations.  Conversely, if $\set{R_1,\ldots, R_n}$ is a collection of sets obtained from applying a sequence of $p$-splitting operations to region $R$, then 
\[
\set{ \tn{scale}_{R \to (0,1)^d}(R_i) : i \in [n]}
\]
is an element of $\D_{d,p,n}$. 

We then have the following:

\begin{lemma}\label{MainThm1Lem1}
Let $d \geq 1, p \geq 2$ be fixed integers, and let $y = \sum_{n \geq 1} C_{d,p}(n) x^n$. Then for every subset $S \subseteq [d]$,
\[
\sum_{k \geq 1}  \left| \mathcal{H}_{S, k} \right| x^k = y^{p^{|S|}}.
\]
\end{lemma}

\begin{proof}
For convenience, let $n := p^{|S|}$ throughout this proof. First, observe that
\[
\bigcup_{k \geq p^{|S|}} \mathcal{H}_{S,k} = \set{ D \in \D_{d,p} : D \succeq D_S}.
\]
Next, we show that there is a natural bijection between the set above and $(\D_{d,p})^{n}$: Let $B_1, \ldots, B_n$ be the regions in $D_{S}$. Then, given $n$ decompositions $D_1, \ldots, D_n \in \D_{d,p}$, observe that
\[
D := \bigcup_{j=1}^n \set{ \tn{scale}_{(0,1)^d \to B_j}(R) : R \in D_j}
\]
gives a decomposition of $(0,1)^d$ that refines $D_{S}$. On the other hand, given $D \in \D_{d,p}$ that refines $D_{S}$, define $S_j \subseteq D$ such that
\[
S_j := \set{ R \in D : R \subseteq B_j}
\]
for every $j \in [n]$. By assumption that $D \succeq D_{S}$, we know that $S_1, \ldots, S_n$ partition $D$, as well as that $S_j$ can be obtained from applying a sequence of splitting operations to $B_j$ for every $j \in [n]$. Hence, each of 
\[
D_j := \set{ \tn{scale}_{B_j \to (0,1)^d} (R) : R \in S_j}
\]
is an element of $\D_{d,p}$. This shows that $\left| \mathcal{H}_{S,k} \right|$ is equal to the number of $n$-tuples of decompositions $(D_1, \ldots, D_n) \in (\D_{d,p})^n$ where $\sum_{i=1}^n |D_i| = k$. Thus, it follows that $\left| \mathcal{H}_{S,k} \right| = [x^k] y^{n}$ for every $k \geq 1$, which establishes our claim.
\end{proof}

We are now ready to give our proof for Theorem~\ref{BDTheorem}.

\begin{proof}[Proof of Theorem~\ref{BDTheorem}]
Notice that when $n \geq 2$, every $D \in \D_{d,p,n}$ refines $D_{\set{i}}$ for some coordinate $i$. Thus, we see that $\D_{d,p,n} = \bigcup_{i \in [d]} \mathcal{H}_{ \set{i} ,n}$. Applying the principle of inclusion-exclusion, we obtain that, for every $n \geq 2$:
\[
C_{d,p}(n) = \left| \bigcup_{i \in [d]} \mathcal{H}_{ \set{i}, n} \right|
=  \sum_{S \subseteq [d], |S| \geq 1} (-1)^{|S|+1} \left| \bigcap_{i \in S} \mathcal{H}_{\set{i}, n} \right| 
=  \sum_{S \subseteq [d], |S| \geq 1} (-1)^{|S|+1} \left| \mathcal{H}_{S, n} \right|.
\]
Since the above holds for all $n \geq 2$, we can multiply each side by $x^n$ for each $n$ and sum them up over all $n \geq 2$, and obtain
\begin{equation}\label{MainThm1Eq3}
\sum_{n \geq 2} C_{d,p}(n) x^n = \sum_{n \geq 2} \sum_{S \subseteq [d], |S| \geq 1} (-1)^{|S|+1} \left| \mathcal{H}_{S, n} \right| x^n.
\end{equation}
The left hand side of~\eqref{MainThm1Eq3} is simply equal to $y-x$. For the right hand side, we see that
\begin{align*}
& \sum_{n \geq 2} \sum_{S \subseteq [d], |S| \geq 1} (-1)^{|S|+1} \left| \mathcal{H}_{S, n} \right| x^n \\
&=  \sum_{S \subseteq [d], |S| \geq 1} (-1)^{|S|+1} \left( \sum_{n \geq 2} \left| \mathcal{H}_{S, n} \right| x^n \right) \\
&=  \sum_{S \subseteq [d], |S| \geq 1} (-1)^{|S|+1} y^{p^{|S|}} \\ 
&=  \sum_{k \geq 1}  (-1)^{k+1} \binom{d}{k} y^{p^{k}}.
\end{align*}
Notice that we applied Lemma~\ref{MainThm1Lem1} in the second equality above. Thus,~\eqref{MainThm1Eq3} reduces to
\[
y-x =  \sum_{k \geq 1}  (-1)^{k+1} \binom{d}{k} y^{p^{k}},
\]
which can be easily rearranged to give~\eqref{dgfeq} in the statement of Theorem~\ref{BDTheorem}.
\end{proof}

\subsection{Interchange maximal trees}

Next, we describe a family of trees that is counted by $C_{d,p}(n)$, extending the binary-tree interpretation of the ordinary Catalan numbers. Let $\mathcal{T}_{d,p,n}$ denote the set of full $p$-ary trees 
(i.e., every internal node has exactly $p$ children) with $n$ (unlabelled) leaves, 
such that each of the $m =\frac{n-1}{p-1}$ internal nodes is assigned a label from $[d]$. Also, for convenience, we define
\[
B_{i,j} := \set{ x \in (0,1)^d : \frac{j-1}{p} < x_i < \frac{j}{p}}
\]
for every $i,j \in [p]$. Note that $\set{B_{i,1}, \ldots, B_{i,p}}$ are exactly the sets obtained from $p$-splitting $(0,1)^d$ in coordinate $i$. 

The following describes a mapping from a tree in $\T_{d,p,n}$ to a hypercube decomposition in $\D_{d,p,n}$.

\begin{definition}\label{treetodecomp}
Define the function $f \colon \T_{d,p,n} \to \D_{d,p,n}$ recursively as follows:
\begin{itemize}
\item
($n=1$)
$f$ maps the exceptional tree with a single node to $\set{(0,1)^d}$,
the trivial decomposition with a single region.
\item
($n \ge 2$) Given $T \in \T_{d,p,n}$ where its root node is labelled $i$ hand has subtrees (from left to right) 
$T_{1}, \ldots, T_p$, define
\[
f(T) = \bigcup_{j=1}^p \set{ \tn{scale}_{(0,1)^d \to B_{i,j}}(R) : R \in f(T_j) }.
\]
That is, we take the decompositions $f(T_1), \ldots, f(T_p)$, and scale each of them to fit into the sets $B_{i,1}, \ldots, B_{i,p}$ respectively. Then $f(T)$ --- the union of these $p$ scaled decompositions --- would be a decomposition of $(0,1)^d$ in its own right.
\end{itemize}
Also, we define two trees $T,T'$ to be \emph{interchange equivalent} if $f(T) = f(T')$. 
\end{definition}

Figure~\ref{figtreetodecomp} illustrates Definition~\ref{treetodecomp}:
It displays three trees in $\T_{3,2,6}$ that are mapped by $f$ 
to the same decomposition in $\D_{3,2,6}$, and hence shows that $f$ is not one-to-one.

\begin{figure}[htb]
\begin{center}
\begin{tabular}{c@{\qquad}c@{\qquad}c}
$T_1$ & $T_2$ & $T_3$
\\[1mm]
\begin{tikzpicture}
[scale=0.35,thick,main node/.style={circle,inner sep=0.5mm,draw,font=\scriptsize\sffamily}]
  \node[main node] at (4,6) (0) {$1$};
  \node[main node] at (2,4) (l){$2$};
  \node[main node] at (6,4) (r){$2$};
  \node[main node] at (1,2) (ll){};
  \node[main node] at (3,2) (lr){};
  \node[main node] at (5,2) (rl){$3$};
  \node[main node] at (7,2) (rr){$3$};
  \node[main node] at (4.5,0) (rll){};
  \node[main node] at (5.5,0) (rlr){};
  \node[main node] at (6.5,0) (rrl){};
  \node[main node] at (7.5,0) (rrr){};
  \path[every node/.style={font=\sffamily}]
    (0) edge (l)
    (0) edge (r)
    (l) edge (ll)
    (l) edge (lr)
    (r) edge (rl)
    (r) edge (rr)
    (rl) edge (rll)
    (rl) edge (rlr)
    (rr) edge (rrl)
    (rr) edge (rrr)
    ;
\end{tikzpicture}
&
\begin{tikzpicture}
[scale=0.35,thick,main node/.style={circle,inner sep=0.5mm,draw,font=\scriptsize\sffamily}]
  \node[main node] at (4,6) (0) {$1$};
  \node[main node] at (2,4) (l){$2$};
  \node[main node] at (6,4) (r){$3$};
  \node[main node] at (1,2) (ll){};
  \node[main node] at (3,2) (lr){};
  \node[main node] at (5,2) (rl){$2$};
  \node[main node] at (7,2) (rr){$2$};
  \node[main node] at (4.5,0) (rll){};
  \node[main node] at (5.5,0) (rlr){};
  \node[main node] at (6.5,0) (rrl){};
  \node[main node] at (7.5,0) (rrr){};
  \path[every node/.style={font=\sffamily}]
    (0) edge (l)
    (0) edge (r)
    (l) edge (ll)
    (l) edge (lr)
    (r) edge (rl)
    (r) edge (rr)
    (rl) edge (rll)
    (rl) edge (rlr)
    (rr) edge (rrl)
    (rr) edge (rrr)
    ;
\end{tikzpicture}
&
\begin{tikzpicture}
[scale=0.35,thick,main node/.style={circle,inner sep=0.5mm,draw,font=\scriptsize\sffamily}]
  \node[main node] at (4,6) (0) {$2$};
  \node[main node] at (2,4) (l){$1$};
  \node[main node] at (6,4) (r){$1$};
  \node[main node] at (1,2) (ll){};
  \node[main node] at (3,2) (lr){$3$};
  \node[main node] at (5,2) (rl){};
  \node[main node] at (7,2) (rr){$3$};
  \node[main node] at (2.5,0) (lrl){};
  \node[main node] at (3.5,0) (lrr){};
  \node[main node] at (6.5,0) (rrl){};
  \node[main node] at (7.5,0) (rrr){};
  \path[every node/.style={font=\sffamily}]
    (0) edge (l)
    (0) edge (r)
    (l) edge (ll)
    (l) edge (lr)
    (r) edge (rl)
    (r) edge (rr)
    (lr) edge (lrl)
    (lr) edge (lrr)
    (rr) edge (rrl)
    (rr) edge (rrr)
    ;
\end{tikzpicture}
\\[6mm]
\multicolumn{3}{c}{
$f(T_1)=f(T_2)=f(T_3)=$
\adjustbox{valign=m}{
\begin{tikzpicture}
[scale=0.3,main node/.style={circle,draw,font=\scriptsize\sffamily},font = \scriptsize]
\def\ys{2} 
\draw[thick, ->] (0,0) -- (12.5,0) node[anchor = west] {$1$};
\draw[thick, ->] (0,0) -- (3, {3*\ys}) node[anchor = south] {$2$};
\draw[thick, ->] (0,0) -- (0,12.5) node[anchor = south] {$3$};
\draw (0,0) -- (10,0) -- ( 12, {2* \ys}) -- (2,{2* \ys}) -- (0,0);
\draw (0,10) -- (10,10) -- (12,{10 + 2* \ys}) -- (2,{10 + 2* \ys}) -- (0,10);
\draw (0,0) -- (0,10);
\draw (10,0) -- (10,10);
\draw (12,{2* \ys}) -- (12,{10 + 2* \ys});
\draw (2,{2* \ys}) -- (2,{10 + 2* \ys});
\draw[dashed] (5,0) -- (7,{2* \ys}) -- (7,{10 + 2* \ys}) -- (5,10) -- (5,0);
\draw[dashed] (1,{\ys}) -- (11,{\ys}) -- (11,{10+\ys}) -- (1,{10+\ys}) -- (1,{\ys});
\draw[dashed] (6,{\ys}) -- (6,{10+\ys});
\draw[dashed] (5,5) -- (10,5) -- (12, {5+2*\ys}) -- (7,{5+2*\ys}) -- (5,5);
\draw[dashed] (6,{5+\ys}) -- (11,{5+\ys});
\end{tikzpicture}
}}
\end{tabular}
\caption{Example of the map $f$ from trees to decompositions (Definition~\ref{treetodecomp})}\label{figtreetodecomp}
\end{center}
\end{figure}

Figure~\ref{figinterchangeequiv} describes how to produce interchange equivalent trees. 
Suppose $T$ is a tree whose root has label $i$, and all $p$ children of the root node are internal nodes with the same label $j \neq i$. 
Let $T_{11}, T_{12}, \ldots, T_{pp}$ denote the $p^2$ subtrees from left to right 
of the $p$ nodes labelled $j$ (top of Figure~\ref{figinterchangeequiv}). Then, by the definition of $f$, we have
\begin{align*}
f(T) &= \bigcup_{k \in [p]} \bigcup_{\ell \in [p]} \tn{scale}_{(0,1)^d \to B_{i,k}}\left(\tn{scale}_{(0,1)^d \to B_{j, \ell}}(f(T_{k\ell})) \right)\\
&= \bigcup_{k, \ell \in [p]} \tn{scale}_{(0,1)^d \to B_{i,k} \cap B_{j,\ell}} (f(T_{k\ell})).
\end{align*}
Next, if we let $T'$ be the tree at the bottom of Figure~\ref{figinterchangeequiv}, then by the same rationale one could show that $f(T')$ yields the same collection of sets as above for $f(T)$. Thus, $T,T'$ are interchange equivalent. More generally, if $T$ was a subtree of a larger tree $\overline{T}$ and we let $\overline{T}'$ be the tree obtained from $\overline{T}$ by replacing the subtree $T$ by $T'$, then it is easy to see that $f(\overline{T}) = f(\overline{T}')$. Notice that the three trees in Figure~\ref{figtreetodecomp} can be transformed into each other using this ``subtree swapping'' process that preserves interchange equivalence.

\begin{figure}[htb]
\begin{center}
\begin{tabular}{c}
\begin{tikzpicture}
[scale =0.67, thick, main node/.style={circle, inner sep =0.5mm,draw,  font=\footnotesize\sffamily}, tree node/.style={draw, inner sep =0.7mm, font=\footnotesize\sffamily}]

  \node[main node] at ( 6,6) (0) {$i$};
  \node[main node] at ( 0,4) (1) {$j$};
  \node[main node] at ( 4,4) (2) {$j$};
  \node[main node] at (12,4) (p) {$j$};
  \node[tree node] at (-1,2) (11){$T_{11}$};
  \node[tree node] at ( 1,2) (1p){$T_{1p}$};
  \node[tree node] at ( 3,2) (21){$T_{21}$};
  \node[tree node] at ( 5,2) (2p){$T_{2p}$};
  \node[tree node] at (11,2) (p1){$T_{p1}$};
  \node[tree node] at (13,2) (pp){$T_{pp}$};

  \path[every node/.style={font=\sffamily}]
    (0) edge (1)
    (0) edge (2)
    (0) edge (p)
    (1) edge (11)
    (1) edge (1p)
    (2) edge (21)
    (2) edge (2p)
    (p) edge (p1)
    (p) edge (pp);
    
    \node at ( 8,4) {$\boldsymbol{\cdots\cdots}$};
    \node at ( 8,2) {$\boldsymbol{\cdots\cdots}$};
    \node at ( 4,2) {$\boldsymbol{\cdots}$};
    \node at ( 0,2) {$\boldsymbol{\cdots}$};
    \node at (12,2){$\boldsymbol{\cdots}$};
\end{tikzpicture}

\\[5mm]

\begin{tikzpicture}
[scale =0.67, thick, main node/.style={circle, inner sep =0.5mm,draw,  font=\footnotesize\sffamily}, tree node/.style={draw, inner sep =0.7mm, font=\footnotesize\sffamily}]

  \node[main node] at ( 6,6) (0) {$j$};
  \node[main node] at ( 0,4) (1) {$i$};
  \node[main node] at ( 4,4) (2) {$i$};
  \node[main node] at (12,4) (p) {$i$};
  \node[tree node] at (-1,2) (11){$T_{11}$};
  \node[tree node] at ( 1,2) (1p){$T_{p1}$};
  \node[tree node] at ( 3,2) (21){$T_{12}$};
  \node[tree node] at ( 5,2) (2p){$T_{p2}$};
  \node[tree node] at (11,2) (p1){$T_{1p}$};
  \node[tree node] at (13,2) (pp){$T_{pp}$};

  \path[every node/.style={font=\sffamily}]
    (0) edge (1)
    (0) edge (2)
    (0) edge (p)
    (1) edge (11)
    (1) edge (1p)
    (2) edge (21)
    (2) edge (2p)
    (p) edge (p1)
    (p) edge (pp);
    
    \node at ( 8,4) {$\boldsymbol{\cdots\cdots}$};
    \node at ( 8,2) {$\boldsymbol{\cdots\cdots}$};
    \node at ( 4,2) {$\boldsymbol{\cdots}$};
    \node at ( 0,2) {$\boldsymbol{\cdots}$};
    \node at (12,2){$\boldsymbol{\cdots}$};
\end{tikzpicture}

\end{tabular}
\vspace{1mm}
\caption{Two interchange equivalent trees}\label{figinterchangeequiv}
\end{center}
\end{figure}

Next, we consider a map that acts as an inverse of $f$ by choosing a unique representative in each inverse image $f^{-1}(D)$ for $D \in \D_{d,p,n}$. Intuitively, given a decomposition, we construct the corresponding tree by starting from the root and iteratively choosing the maximum possible node label. More precisely:

\begin{definition}\label{decomptotree}
Define the function $g: \D_{d,p,n} \to \T_{d,p,n}$ recursively as follows:
\begin{itemize}
\item
($n=1$) $g$ maps the trivial decomposition $\set{ (0,1)^d}$ to the tree with a single node.
\item
($n \geq 2$) Given a decomposition $D\in \D_{d,p,n}$, we choose the largest index $i$ such that every region in $D$ is contained in $B_{i,j}$ for some $j$. Next, define $R_j = \set{ R \in D, R \subseteq B_{i,j}}$ for every $j \in [p]$. By the choice of $i$, $R_1, \ldots, R_p$ must partition $D$, and 
\[
D_j := \set{ \tn{scale}_{B_{i,j} \to (0,1)^d} (R) : R \in R_j}
\]
is a decomposition of the unit hypercube in its own right, for every $j \in [p]$.

We then define $g(D)$ to be the $p$-ary tree with root node labelled $i$, with subtrees $g(D_1), \ldots, g(D_p)$ from left to right.
\end{itemize}
\end{definition}

\begin{example}
Let $D$ be the decomposition in Figure~\ref{figtreetodecomp}, which has 6 regions:
\begin{alignat*}{2}
D = 
\{ \;
&(0,1/2) \times (0, 1/2) \times (0,1),   &\quad &(0,1/2) \times (1/2,1) \times (0,1), 
\\
&(1/2,1) \times (0, 1/2) \times (0,1/2), &\quad &(1/2,1) \times (1/2, 1) \times (0,1/2), 
\\
&(1/2,1) \times (0, 1/2) \times (1/2,1), &\quad &(1/2,1) \times (1/2, 1) \times (1/2,1)  
\;\; \}. 
\end{alignat*}
Every region in $D$ is contained in one of the sets
\[
B_{1,1} =  (0,1/2) \times (0,1) \times (0,1), \quad B_{1,2} = (1/2,1) \times (0,1) \times (0,1).
\]
However, the same can be said for 
\[
B_{2,1} =  (0,1) \times (0,1/2) \times (0,1), \quad B_{2,2} = (0,1) \times (1/2,1) \times (0,1).
\]
but \emph{not} for the sets $B_{3,1}, B_{3,2}$. 
Thus, we choose the index $i=2$, and so the root node of $g(D)$ has label $2$. 
Continuing in this way, one finds that $g(D)$ is the tree $T_3$ in Figure~\ref{figtreetodecomp}.
\end{example}

By the construction of the functions $f$ and $g$, it is clear that $g$ is a one-to-one function, and that $f(g(D)) = D$ for every $D \in \D_{d,p,n}$. Hence $f$ is onto, which implies that $C_{d,p}(n)$ counts 
the number of interchange equivalence classes in $\T_{d,p,n}$. In other words, $C_{d,p}(n)$ counts the number of trees in the image of $g$. We characterize these trees through the following definition:

\begin{definition}\label{defninterchangemax}
A tree $T \in \T_{d,p,n}$ is \emph{interchange maximal} if it has no subtree $\tilde{T}$ such that
\begin{enumerate}
\item[(T1)]
$\tilde{T}$ has root labelled $i$;
\item[(T2)]
there exists $\tilde{T'}$ with root labelled $j$ where $j > i$ such that $f(\tilde{T}') = f(\tilde{T})$.
\end{enumerate}
\end{definition}

\begin{example}
In Figure~\ref{figtreetodecomp}, $T_1$ and $T_2$ are not interchange maximal 
since they both have root labelled $i=1$, while there exists an interchange equivalent tree $T_3$ 
with root node labelled $j=2$. 
One can check that $T_3$ is indeed interchange maximal. 
\end{example}

Since the function $g$ always picks the largest possible node label when generating the tree from the top down, it follows immediately from Definitions~\ref{decomptotree} and~\ref{defninterchangemax} that if $T \in \T_{d,p,n}$ is in the image of $g$, then $T$ must be interchange maximal. We show that the converse is also true.

\begin{lemma}\label{uniqueintmax}
If $T \in \T_{d,p,n}$ is interchange maximal, then there exists $D \in \D_{d,p,n}$ where $g(D) = T$.
\end{lemma}

\begin{proof}
Suppose for a contradiction that there exists an interchange maximal tree $T \in \T_{d,p,n}$ that is not in the image of $g$. Moreover, choose such a tree $T$ where $n$ is minimized. Then, there exists a distinct tree $T' := g(f(T))$ where $f(T) = f(T')$. Since $T'$ is in the image of $g$, it must be interchange maximal as well.

Next, suppose the root node of $T$ has label $i$ (the root node of $T$ must be an internal node as the single-node tree is indeed in the image of $g$).  Then it follows that the root node of $T'$ must also have label $i$, otherwise the tree with the smaller root node label would not be interchange maximal.

Next, let $T_1, \ldots, T_p$ be the subtrees of the root of $T$ from left to right, and likewise let $T'_1, \ldots, T'_p$ be subtrees of the root of $T'$ from left to right. Since $f(T) = f(T')$ and their root nodes both have label $i$, it follows that $f(T_j) = f(T'_j)$ for all $j \in [p]$. Also, since $T \neq T'$, there must exist some index $\ell$ where $T_{\ell} \neq T'_{\ell}$.

However, It is clear from Definition~\ref{defninterchangemax} that any subtree of an interchange maximal tree is also interchange maximal. Thus, we have just found two interchange maximal trees $T_{\ell}, T'_{\ell}$ that are interchange equivalent, which implies that at least one of $T_{\ell}, T'_{\ell}$ is not in the image of $g$. This contradicts the minimality of $n$.
\end{proof}

Thus, through Lemma~\ref{uniqueintmax} and the immediately preceding discussion, we have shown the following:

\begin{theorem}
For every integer $d \geq 1, p \geq 2$, and $n \geq 1$, $C_{d,p}(n)$ counts the number of interchange maximal trees in $\T_{d,p,n}$.
\end{theorem}

Finally, notice that condition (T2) in Definition~\ref{defninterchangemax} involves the tree-to-decomposition function $f$. We conclude this section by showing an alternative characterization of interchange maximal trees that is graph theoretical and does not make references to decompositions.

\begin{proposition}
Condition (T2) in Definition~\ref{defninterchangemax} is equivalent to
\begin{enumerate}
\item[(T2')]
There exists index $j > i$ such that every path joining the root of $\tilde{T}$ and a leaf node of $\tilde{T}$ contains an internal node labelled $j$.
\end{enumerate}
\end{proposition}

\begin{proof}
We first prove that (T2') implies (T2). Suppose we are given a tree $T$ with a subtree $\tilde{T}$ that satisfies (T2'). Let $u$ denote the root node of $\tilde{T}$. Given a leaf node $v$ in $\tilde{T}$, consider the unique path in $\tilde{T}$ joining $u$ and $v$, and let $d(v)$ denote the distance between $u$ and the closest $j$-node on this path. Given (T2'), $d(v)$ is well-defined and finite for every leaf node $v$. 

Let $d_{\max} := \max \set{d(v) : \tn{$v$ is a leaf of $\tilde{T}$}}$. We prove that (T2) holds by induction on $d_{\max}$. If $d_{\max} = 1$, then every child of $u$ is an internal node labelled $j$, and one can perform the subtree swapping operation described in Figure~\ref{figinterchangeequiv} to find a tree $\tilde{T}'$ with root node label $j$ where $f(\tilde{T}) = f(\tilde{T}')$, establishing (T2). For the inductive step, suppose $d_{\max} = \ell > 1$. Let $v$ be a leaf node where $d(v) = \ell$, and let $u_0, \ldots, u_k$ be unique path in $\tilde{T}$ going from $u$ to $v$. Thus, we know that $u=u_0, v=u_k$, $u_{\ell}$ is an internal node labelled $j$, and none of $u_1, \ldots, u_{\ell - 1}$ has label $j$. Now consider the node $u_{\ell-1}$ and its $p$ children. If $w$ is a child of $u_{\ell-1}$ and $w$ was not an internal node labelled $j$, then let $v'$ be a leaf in $\tilde{T}$ that is a descendent of $w$ (possibly $w$ itself). Now observe that the $uv'$ path in $\tilde{T}$ contains all of the vertices $ u_1,  \ldots, u_{\ell-1}$, and $w$, none of which is an internal node labelled $j$. This means that $d(v') > d(v) =  \ell$, contradicting our choice of $v$. Thus, all $p$ children of $u_{\ell-1}$ are internal nodes labelled $j$, and again we can apply the tree swapping operation to obtain an interchange equivalent tree $\tilde{T}'$ where the node $u_{\ell-1}$ now has label $j$. If we iteratively perform this operation on all leaves $v$ with $d(v) = \ell$, we will eventually obtain an interchange equivalent tree with $d_{\max}$ lower than $\ell$, completing the induction.

Next, we prove that (T2) implies (T2'). For this part of the proof, it is convenient to extend the tree-to-decomposition mapping $f$ as follows. Notice that for each leaf node $v$ of a given tree $T$ naturally corresponds to a region in the decomposition $f(T)$. Thus, given a leaf node $v$ in a tree $T$, let $f(T, v)$ denote the region in $f(T)$ that corresponds to node $v$. Now, suppose we have a subtree $\tilde{T}$ where (T2) holds, and so there is a tree $\tilde{T}'$ with root node label $j$ and $f(\tilde{T}') = f(\tilde{T})$. Thus,
\[
\set{ f(\tilde{T}, v) : \tn{$v$ a leaf of $\tilde{T}$} } = \set{ f(\tilde{T}', v') : \tn{$v'$ a leaf of $\tilde{T}'$} }.
\]
In particular, since $\tilde{T}'$ has root node label $j$, we are assured that for every leaf $v$ of $\tilde{T}$, $f(\tilde{T},v)$ is a subset of one of $B_{j,1}, \ldots, B_{j,p}$. Thus, the sequence of splitting operations that decomposes $(0,1)^d$ into region $f(\tilde{T},v)$ must involve splitting in coordinate $j$ at some point. This means that, for every leaf $v$ of $\tilde{T}$, the path joining the root of $\tilde{T}$ and $v$ must contain an internal node with label $j$, proving (T2').
\end{proof}

For an example, condition (T2') provides an easy way to see that the tree $T_2$ in Figure~\ref{figtreetodecomp} is not interchange maximal, since it has root node label $i=1$, while every path joining this node and a leaf node of the tree contains a node labelled $j=2$.

\section{Future research directions}
\label{sec5}

\subsection{Wedderburn--Etherington numbers}

This paper has developed a more complete understanding of the numbers $C_{d,p}(n)$ 
and established a connection between 
hypercube decompositions and interchange maximal trees.
It is an important open problem to determine if a similar theory can be developed 
for other well-known variations on the Catalan numbers.
In particular, we consider the Wedderburn--Etherington numbers \cite{Etherington37,Wedderburn22};
see also OEIS \seqnum{A001190}.
In this case we have a binary operation that is commutative but not associative;
we let $W(n)$ denote the number of ways to interpret $x^n$ under this operation. 
For instance, for $x^4$ we have
\[
((xx)x)x = (x(xx))x = x(x(xx)) = x((xx)x),
\]
but all of these are distinct from $(xx)(xx)$. 
Thus there are only two distinct interpretations of $x^4$, and so $W(4) = 2$. 
The following table lists the distinct $n^{\tn{th}}$ powers for $n \leq 5$. 
\begin{center}
\begin{tabular}{c|c|c}
$n$ & $n^{\tn{th}}$ commutative nonassociative powers & $W(n)$\\
\hline
$1$ & $x$ & $1$\\
$2$ & $xx$ & $1$\\
$3$ & $(xx)x$ & $1$\\
$4$ & $((xx)x)x, \;\; (xx)(xx)$ & $2$ \\
$5$ & $((xx)x)x)x, \;\; ((xx)xx))x, \;\; ((xx)x)(xx)$ & $3$\\
\end{tabular}
\end{center}
The growth rate of $W(n)$ has been determined~\cite{Landau77}, 
and more recently a non-recursive formula for these numbers has been found \cite[Theorem 2]{Billey}.
It would be interesting to investigate higher-dimensional analogues 
of the Wedderburn--Etherington numbers involving $d$ distinct $p$-ary operations
satisfying various generalizations of commutativity.

\subsection{Algebraic operads}

The last decade or two has seen the development of a new theory of
operadic combinatorics \cite{Bacher,Chapoton,Giraudo,Loday},
which constructs algebraic operads in terms of combinatorial objects,
applies the theory of operads to illuminate combinatorial structures,
and uncovers new integer sequences which deserve further combinatorial study.
The original motivation for the present paper came from combinatorial aspects 
of work by the third author and Dotsenko \cite{BD} on Boardman--Vogt tensor products 
of absolutely free operads.

In this section revisit the trees in Figure~\ref{figtreetodecomp}, and give a brief explanation for their equivalence in the operadic perspective. This discussion assumes familiarity with some of the basic terminology from the theory of Gr\"obner bases for operads \cite{DK}; see also \cite{BDbook}.

In the case of $d = 3$ and $p=2$, we have three binary operations $\bigcdot_1$, $\bigcdot_2$, $\bigcdot_3$
satisfying the three interchange laws:
\begin{align*}
&( a\, \bigcdot_2\, b ) \,\bigcdot_1\, ( c\, \bigcdot_2 \,d ) = ( a\, \bigcdot_1 \,c ) \,\bigcdot_2\, ( b\, \bigcdot_1 \,d ),
\\
&( a\, \bigcdot_3\, b ) \,\bigcdot_1\, ( c\, \bigcdot_3\, d ) = ( a\, \bigcdot_1\, c ) \,\bigcdot_3\, ( b\, \bigcdot_1 \,d ),
\\
&( a \,\bigcdot_3\, b ) \,\bigcdot_2\, ( c\, \bigcdot_3\, d ) = ( a \,\bigcdot_2\, c ) \,\bigcdot_3\, ( b\, \bigcdot_2\, d ).
\end{align*}

The operation order $\bigcdot_1 \prec  \bigcdot_2 \prec \bigcdot_3$ extends to 
a monomial order on operad ideal generated by these three relations.
Then the three interchange laws above can be represented by the following tree polynomials:
\begin{align*}
\alpha
\;\;
&=
\adjustbox{valign=m}{
\begin{tikzpicture}[scale =0.35, thick,main node/.style={circle, minimum size=0.4cm, inner sep =0.1mm, draw,  font=\scriptsize\sffamily}]
  \node[main node] at (4,6) (0) {$1$};
  \node[main node] at (2,4) (l){$2$};
    \node[main node] at (6,4) (r){$2$};
  \node[main node] at (1,2) (ll){$a$};
    \node[main node] at (3,2) (lr){$b$};
  \node[main node] at (5,2) (rl){$c$};
    \node[main node] at (7,2) (rr){$d$};
  \path[every node/.style={font=\sffamily}]
    (0) edge (l)
    (0) edge (r)
    (l) edge (ll)
 (l) edge (lr)
    (r) edge (rl)
    (r) edge (rr)  
    ;
\end{tikzpicture}
}
-
\adjustbox{valign=m}{
\begin{tikzpicture}[scale =0.35, thick,main node/.style={circle,  minimum size=0.4cm, inner sep =0.1mm, draw,  font=\scriptsize\sffamily}]
  \node[main node] at (4,6) (0) {$2$};
  \node[main node] at (2,4) (l){$1$};
    \node[main node] at (6,4) (r){$1$};
  \node[main node] at (1,2) (ll){$a$};
    \node[main node] at (3,2) (lr){$c$};
  \node[main node] at (5,2) (rl){$b$};
    \node[main node] at (7,2) (rr){$d$};
  \path[every node/.style={font=\sffamily}]
    (0) edge (l)
    (0) edge (r)
    (l) edge (ll)
 (l) edge (lr)
    (r) edge (rl)
    (r) edge (rr)  
    ;
\end{tikzpicture}
}
\\
\beta
\;\;
&=
\adjustbox{valign=m}{
\begin{tikzpicture}[scale =0.35, thick,main node/.style={circle,  minimum size=0.4cm, inner sep =0.1mm, draw,  font=\scriptsize\sffamily}]
  \node[main node] at (4,6) (0) {$1$};
  \node[main node] at (2,4) (l){$3$};
    \node[main node] at (6,4) (r){$3$};
  \node[main node] at (1,2) (ll){$a$};
    \node[main node] at (3,2) (lr){$b$};
  \node[main node] at (5,2) (rl){$c$};
    \node[main node] at (7,2) (rr){$d$};
  \path[every node/.style={font=\sffamily}]
    (0) edge (l)
    (0) edge (r)
    (l) edge (ll)
 (l) edge (lr)
    (r) edge (rl)
    (r) edge (rr)  
    ;
\end{tikzpicture}
}
-
\adjustbox{valign=m}{
\begin{tikzpicture}[scale =0.35, thick,main node/.style={circle,  minimum size=0.4cm, inner sep =0.1mm, draw,  font=\scriptsize\sffamily}]
  \node[main node] at (4,6) (0) {$3$};
  \node[main node] at (2,4) (l){$1$};
    \node[main node] at (6,4) (r){$1$};
  \node[main node] at (1,2) (ll){$a$};
    \node[main node] at (3,2) (lr){$c$};
  \node[main node] at (5,2) (rl){$b$};
    \node[main node] at (7,2) (rr){$d$};
  \path[every node/.style={font=\sffamily}]
    (0) edge (l)
    (0) edge (r)
    (l) edge (ll)
 (l) edge (lr)
    (r) edge (rl)
    (r) edge (rr)
    ;
\end{tikzpicture}
}
\\
\gamma
\;\;
&=
\adjustbox{valign=m}{
\begin{tikzpicture}[scale =0.35, thick,main node/.style={circle,  minimum size=0.4cm, inner sep =0.1mm, draw,  font=\scriptsize\sffamily}]
  \node[main node] at (4,6) (0) {$2$};
  \node[main node] at (2,4) (l){$3$};
    \node[main node] at (6,4) (r){$3$};
  \node[main node] at (1,2) (ll){$a$};
    \node[main node] at (3,2) (lr){$b$};
  \node[main node] at (5,2) (rl){$c$};
    \node[main node] at (7,2) (rr){$d$};
  \path[every node/.style={font=\sffamily}]
    (0) edge (l)
    (0) edge (r)
    (l) edge (ll)
 (l) edge (lr)
    (r) edge (rl)
    (r) edge (rr)
    ;
\end{tikzpicture}
}
-
\adjustbox{valign=m}{
\begin{tikzpicture}[scale =0.35, thick,main node/.style={circle, minimum size=0.4cm, inner sep =0.1mm, draw,  font=\scriptsize\sffamily}]
  \node[main node] at (4,6) (0) {$3$};
  \node[main node] at (2,4) (l){$2$};
    \node[main node] at (6,4) (r){$2$};
  \node[main node] at (1,2) (ll){$a$};
    \node[main node] at (3,2) (lr){$c$};
  \node[main node] at (5,2) (rl){$b$};
    \node[main node] at (7,2) (rr){$d$};
  \path[every node/.style={font=\sffamily}]
    (0) edge (l)
    (0) edge (r)
    (l) edge (ll)
 (l) edge (lr)
    (r) edge (rl)
    (r) edge (rr)
    ;
\end{tikzpicture}
}
\end{align*}
The least common multiple of the leading tree monomials of $\alpha$ and $\gamma$ is
\[
[\alpha,\gamma]
=
\adjustbox{valign=m}{
\begin{tikzpicture}[scale =0.35, thick,main node/.style={circle, minimum size=0.4cm, inner sep =0.1mm, draw, font=\scriptsize\sffamily}]
  \node[main node] at (4,6) (0) {$1$};
  \node[main node] at (2,4) (l){$2$};
    \node[main node] at (6,4) (r){$2$};
  \node[main node] at (1,2) (ll){$a$};
    \node[main node] at (3,2) (lr){$b$};
  \node[main node] at (4.5,2) (rl){$3$};
    \node[main node] at (7.5,2) (rr){$3$};
  \node[main node] at (3,0) (rll){$c$};
    \node[main node] at (5,0) (rlr){$d$};
  \node[main node] at (6.5,0) (rrl){$e$};
    \node[main node] at (8.5,0) (rrr){$f$};
  \path[every node/.style={font=\sffamily}]
    (0) edge (l)
    (0) edge (r)
    (l) edge (ll)
 (l) edge (lr)
    (r) edge (rl)
    (r) edge (rr)
        (rl) edge (rll)
    (rl) edge (rlr)
        (rr) edge (rrl)
    (rr) edge (rrr)
    ;
    ;
\end{tikzpicture}
}
\]
It follows that the operad ideal generated by $\alpha$ and $\gamma$ contains
the following tree polynomial:
\[
\epsilon
\;
=
\;
\adjustbox{valign=m}{
\begin{tikzpicture}[scale =0.35, thick,main node/.style={circle,  minimum size=0.4cm, inner sep =0.1mm, draw,  font=\scriptsize\sffamily}]
  \node[main node] at (4,6) (0) {$1$};
  \node[main node] at (2,4) (l){$2$};
    \node[main node] at (6,4) (r){$3$};
  \node[main node] at (1,2) (ll){$a$};
    \node[main node] at (3,2) (lr){$b$};
  \node[main node] at (4.5,2) (rl){$2$};
    \node[main node] at (7.5,2) (rr){$2$};
  \node[main node] at (3,0) (rll){$c$};
    \node[main node] at (5,0) (rlr){$e$};
  \node[main node] at (6.5,0) (rrl){$d$};
    \node[main node] at (8.5,0) (rrr){$f$};
  \path[every node/.style={font=\sffamily}]
    (0) edge (l)
    (0) edge (r)
    (l) edge (ll)
 (l) edge (lr)
    (r) edge (rl)
    (r) edge (rr)
        (rl) edge (rll)
    (rl) edge (rlr)
        (rr) edge (rrl)
    (rr) edge (rrr)
    ; 
    ;
\end{tikzpicture}
}
\!\!\!
-
\;
\adjustbox{valign=m}{
\begin{tikzpicture}[scale =0.35, thick,main node/.style={circle,  minimum size=0.4cm, inner sep =0.1mm,draw,  font=\scriptsize\sffamily}]
  \node[main node] at (4,6) (0) {$2$};
  \node[main node] at (2,4) (l){$1$};
    \node[main node] at (6,4) (r){$1$};
  \node[main node] at (1,2) (ll){$a$};
    \node[main node] at (3,2) (lr){$3$};
  \node[main node] at (5,2) (rl){$b$};
    \node[main node] at (7,2) (rr){$3$};
  \node[main node] at (2,0) (lrl){$c$};
    \node[main node] at (4,0) (lrr){$d$};
  \node[main node] at (6,0) (rrl){$e$};
    \node[main node] at (8,0) (rrr){$f$};
  \path[every node/.style={font=\sffamily}]
    (0) edge (l)
    (0) edge (r)
    (l) edge (ll)
 (l) edge (lr)
    (r) edge (rl)
    (r) edge (rr)
        (lr) edge (lrl)
    (lr) edge (lrr)
        (rr) edge (rrl)
    (rr) edge (rrr)
    ; 
    ;
\end{tikzpicture}
}
\]
The tree polynomial $\epsilon$ is the simplest new element in 
the operadic Gr\"obner basis beyond the ideal generators $\alpha$, $\beta$, and $\gamma$.
The two terms in $\epsilon$ correspond to the two trees $T_2$ and $T_3$ in Figure~\ref{figtreetodecomp}.
This provides an operadic explanation of the fact that the distinct trees
$T_2$ and $T_3$ produce the same decomposition of the cube.
It remains an open (and probably very difficult) problem to compute 
a complete Gr\"obner basis for this operad ideal.

\section{Acknowledgments}
\label{sec7}

The research of the second and third authors was supported by a Discovery Grant
from NSERC, the Natural Sciences and Engineering Research Council of Canada. 
The authors thank Chris Soteros for very helpful comments on analytic combinatorics, 
which helped us assemble the necessary tools to prove Theorem~\ref{Cdpnasymp}. We are also extremely thankful for the anonymous referees who provided feedback that helped improve the presentation of the paper. Finally, we express our gratitude to Neil Sloane and the numerous contributors 
to the On-Line Encyclopedia of Integer Sequences (OEIS) for developing and curating 
such a valuable resource for mathematical research.

\bigskip
\hrule
\bigskip

\noindent 
2010 \emph{Mathematics Subject Classification}:~Primary 05A15. 
Secondary 05A16, 05A19, 17A42, 17A50, 18D50, 52C17.

\medskip

\noindent 
\emph{Keywords}:~Catalan number,
higher-dimensional analog, 
hypercube decomposition, 
enumeration problem,
generating function,
Lagrange inversion,
interchange law, 
algebraic operad,
Boardman--Vogt tensor product,
nonassociative algebra,
higher category. 

\bigskip
\hrule
\bigskip

\noindent 
Concerned with sequences
\seqnum{A000108},
\seqnum{A001190},
\seqnum{A236339},
\seqnum{A236342}.

\bigskip
\hrule
\bigskip

\end{document}